\documentclass[12pt, a4paper]{amsart}
\usepackage{amssymb}
\usepackage{amsmath,esint,enumitem}
\usepackage{colortbl}
\usepackage[dvipsnames]{xcolor}
\usepackage[colorlinks, allcolors=Fuchsia,pdfstartview=,pdfpagemode=UseNone]{hyperref} %RedViolet
\usepackage{graphics,fullpage}
\usepackage{multirow}
\makeatletter
\@namedef{subjclassname@2020}{\textup{2020} Mathematics Subject Classification}
\makeatother

\newtheorem{theorem}[equation]{Theorem}%[section]
\newtheorem{lemma}[equation]{Lemma}
\newtheorem{proposition}[equation]{Proposition}

\theoremstyle{definition}
\newtheorem{definition}[equation]{Definition}
\newtheorem{assumption}[equation]{Assumption}

\theoremstyle{remark}

\numberwithin{equation}{section}

\let\oldmarginpar\marginpar
\renewcommand\marginpar[1]{\-\oldmarginpar[\raggedleft\footnotesize #1]%
{\raggedright\footnotesize #1}}

\newcommand{ \R }{ \mathbb{R} }

\renewcommand{\epsilon}{\varepsilon}
\renewcommand{\phi}{\varphi}
\renewcommand{\le}{\leqslant}
\renewcommand{\ge}{\geqslant}

\newcommand{\loc}{\mathrm{loc}}

\newcommand{\ainc}[1]{\hyperref[ainc]{{\normalfont(aInc){\ensuremath{_{#1}}}}}}
\newcommand{\adec}[1]{\hyperref[adec]{{\normalfont(aDec){\ensuremath{_{#1}}}}}}
\newcommand{\inc}[1]{\hyperref[inc]{{\normalfont(Inc){\ensuremath{_{#1}}}}}}
\newcommand{\dec}[1]{\hyperref[dec]{{\normalfont(Dec){\ensuremath{_{#1}}}}}}

\newcommand{\DMA}[1]{\hyperref[DMA]{{\normalfont(DMA1)\ensuremath{_{#1}}}}}

\newcommand{\aonen}[1]{\hyperref[aone]{{\normalfont(A1-\ensuremath{#1})}}}
\newcommand{\VAn}[1]{\hyperref[VA]{{\normalfont(VA1-\ensuremath{#1})}}}
\newcommand{\wVAn}[1]{\hyperref[wVA]{{\normalfont(wVA1-\ensuremath{#1})}}}
\newcommand{\VMAn}[1]{\hyperref[VMA]{{\normalfont(VMA1-\ensuremath{#1})}}}
\newcommand{\wVMAn}[1]{\hyperref[wVMA]{{\normalfont(wVMA1-\ensuremath{#1})}}}
\newcommand{\DMAn}[1]{\hyperref[DMA]{{\normalfont(DMA1-\ensuremath{#1})}}}
\newcommand{\wDMAn}[1]{\hyperref[wDMA]{{\normalfont(wDMA1-\ensuremath{#1})}}}
\newcommand{\SAn}[1]{\hyperref[SA]{{\normalfont(SA1-\ensuremath{#1})}}}

\begin{document}
\title[]
{Lipschitz regularity for orthotropic functionals with general growth}

\author{Mikyoung Lee}
  %  Address of record for the research reported here
  \address{Mikyoung Lee, Department of Mathematics and Institute of Mathematical Science, Pusan National University, Busan
46241, Republic of Korea}
%    Current address
\email{mikyounglee@pusan.ac.kr}

\author{Jihoon Ok}
 % Address of record for the research reported here
\address{Jihoon Ok, Department of Mathematics, Institute for Mathematical and Data Science, Sogang University, Seoul 04107, Republic of Korea}
% Current address
\email{\texttt{jihoonok@sogang.ac.kr}}

\author{Bianca Stroffolini}
 % Address of record for the research reported here
\address{Bianca Stroffolini, Department of Mathematics and Applications “R. Caccioppoli”, University of
Naples Federico II, Via Cintia, Monte S. Angelo, 80126 Naples, Italy}
% Current address
\email{\texttt{bstroffo@unina.it}}
% \thanks will become a 1st page footnote.
%\thanks{}

% General info
\date{\today}
\subjclass[2020]{35J70, 35B65, 35J62, 46E35}

\keywords{orthotropic functionals; $\phi$-Laplace equation; Lipschitz regularity;  Orlicz spaces}

\begin{abstract}
We study the local Lipschitz regularity of local minimizers for a class of degenerate orthotropic functionals with
%general Orlicz growth.
 $\phi$-growth, where $\phi$ is a general N-function.
Unlike standard 
%functionals with radial growth, 
isotropic functionals,
the ellipticity of the associated Euler-Lagrange equation degenerates separately in each coordinate direction, presenting 
%severe anisotropic challenges. 
significant anisotropic difficulties.
Furthermore, the general N-function setting lacks the algebraic scale invariance available in the classical orthotropic $p$-Laplacian case.
Despite these structural difficulties, we prove that local minimizers are locally Lipschitz continuous. Our approach relies on a regularized approximation scheme, mixed-direction Caccioppoli inequalities, and a carefully designed Moser-type iteration that incorporates an interpolation argument to bridge the gaps between consecutive integrability exponents.
\end{abstract}

\maketitle

%%%%%%%%%%%%%%%%%%%%%%%%%%%%%%%%%%%%%%%%%%%%%%%%%%%%%%%%%%%%%%%%%%%%%%%%%%%%%%%%%%

\section{Introduction}\label{sect:intro}
%%%%%%%%%%%%%%%%%%%%%%%%
%%%%%%%%%%%%%%%%%%%%%%%%
%%%%%%%%%%%%%%%%%%%%%%%%

In this paper, we study the local regularity of local minimizers $u \in W^{1,\phi}_{\loc}(\Omega)$ of the functional
\begin{equation}\label{main_functional}
\mathfrak{F}_\phi(u ; \Omega):= \sum_{i=1}^n \int_{\Omega} \phi(|u_{x_i}|)\,dx,
\end{equation}
where $\Omega$ is an open set in $\R^n$ with $n \ge 2$, and $\phi:[0,\infty)\to[0,\infty)$ is an N-function %satisfying suitable two-sided growth assumptions. a  $C^1$-function
 satisfying suitable growth conditions, see Assumption~\ref{assump:growth2}.
 We first recall the variational notion of solution considered throughout the paper.
 \begin{definition}
 A function $u \in W^{1,\phi}_{\loc}(\Omega)$ is called a local minimizer of $\mathfrak{F}_\phi$ if it minimizes the functional \eqref{main_functional} under compactly supported perturbations. Namely, for every ball $B \subset \Omega$, the following inequality holds:
% \begin{equation}\label{localmini}
\[
 \mathfrak{F}_\phi(u ; B)\le \mathfrak{F}_\phi(u+\zeta ; B), \quad  \forall \zeta \in C^1_0(B).
 \]
% \end{equation}
 \end{definition}
%%%%%%%%%%%%

Under the growth conditions imposed on $\phi$, we note that the local minimizer $u$ of the functional \eqref{main_functional} is also a local weak solution of the associated orthotropic $\phi$-Laplace equation 
\begin{equation}\label{mainPDE}
\sum_{i=1}^n \left(\frac{\phi' (|u_{x_i}|)}{|u_{x_i}|}u_{x_i}\right)_{x_i}= 0 \ \ \text{ in } \Omega. 
\end{equation}
Equation \eqref{mainPDE} may be regarded as the orthotropic counterpart of the isotropic $\phi$-Laplace equation
\[
\operatorname{div}
\left(
\frac{\phi'(|\nabla u|)}{|\nabla u|}\nabla u
\right)
=0.\]
Although these two equations share comparable growth properties, their ellipticity structures are substantially different. This distinction can be seen as follows. 

Assume that $\phi\in C^{2}((0,\infty))$ satisfies the growth condition \eqref{cond:growth2-2}. For the isotropic integrand
\[
H(\xi):=\phi(|\xi|), \qquad \xi\in\mathbb{R}^{n},
\]
we have that, for $\xi\neq0$ and every $\eta\in\mathbb{R}^{n}$,
\[
\left\langle D^{2}H(\xi)\eta,\eta\right\rangle = \frac{\phi'(|\xi|)}{|\xi|} \left( |\eta|^{2} - \frac{(\xi\cdot\eta)^{2}}{|\xi|^{2}} \right)+ \phi''(|\xi|) \frac{(\xi\cdot\eta)^{2}}{|\xi|^{2}}.
\]
Thus, the radial eigenvalue (that is, when $\xi\cdot\eta=|\xi| |\eta|$)  is $\phi''(|\xi|)$, while the tangential eigenvalues (that is, when $\xi\cdot\eta=0$) are equal to $\phi'(|\xi|)/|\xi|$. Due to \eqref{cond:growth2-2},
these eigenvalues are comparable, and the ellipticity in every direction is governed by a single function depending only on $|\xi|$.
By contrast, 
for the orthotropic integrand 
\[
G(\xi) := \sum_{i=1}^{n}\phi(|\xi_i|), \qquad \xi=(\xi_1,\ldots,\xi_n)\in\mathbb{R}^{n},
\]
we have that for $\xi=(\xi_1,\ldots,\xi_n)\in\R^n$ with $\xi_i\neq0$ for all $i$ and every $\eta\in \R^n$,
\[
\left\langle D^{2}G(\xi)\eta,\eta\right\rangle = \sum_{i=1}^{n} \phi''(|\xi_i|)\eta_i^{2}.
\]
Thus, unlike  the isotropic case, the ellipticity in the $i$-th coordinate direction is determined by the individual component $|\xi_i|$, rather than by $|\xi|$. In particular, one direction may have very weak ellipticity even when $|\xi|$ is large.

The regularity theory for elliptic equations and variational integrals with general isotropic growth is by now well developed. Lieberman extended the classical regularity theory of Ladyzhenskaya and Uraltseva to equations governed by a general growth function $\phi$~\cite{Lie91}. Diening, the third author of the paper and Verde obtained regularity results for $\phi$-Laplace systems~\cite{DieStrVer09}. Very recently, Antonini established interior and global regularity theory for quasilinear uniformly elliptic equations with Orlicz growth \cite{Antonini26}. Note that the previous papers mainly consider the H\"older continuity of the gradient of weak solutions.
Regarding the Lipschitz regularity, it has been known that one can consider weaker ellipticity and growth conditions. A major advance was made by Marcellini and Papi~\cite{MarPap06}, who established local Lipschitz regularity for local minimizers of variational functionals under growth assumptions sufficiently general to include regimes ranging from linear to exponential growth. Esposito, Mingione, and Trombetti later proved local Lipschitz regularity for broad classes of scalar elliptic equations and variational integrals with N-function growth, under weak differentiability and monotonicity assumptions together with suitable generalized uniform ellipticity conditions~\cite{EspMinTro06}.

While previous results provide an important framework for variational problems with Orlicz growth, their ellipticity essentially relies on the modulus of the gradient. However, this property is absent in the orthotropic functional \eqref{main_functional}, since its ellipticity degenerates separately in each coordinate direction. This structural difference highlights a significant gap between  regularity theories for the isotropic and orthotropic functionals. In this paper, we fill this gap by establishing the local Lipschitz regularity. 

\begin{theorem}\label{thm:main}
Let $u\in W^{1,\phi}_{\loc}(\Omega)$ be a local minimizer of the functional $\mathfrak{F}_\phi$. If $\phi \in C^1([0,\infty))$ satisfies Assumption~\ref{assump:growth2} with $2\le p \le q$, then $u$ is locally Lipschitz in $\Omega$. Moreover, for any ball $B_{2r}\Subset \Omega$, we have the estimate
\begin{equation}\label{estimate:Lip}
\| \nabla u\|_{L^{\infty}(B_{r/2})}\le  c\, \phi^{-1} \bigg( \fint_{B_{2r}} \phi(|\nabla u|)\,dx \bigg),
\end{equation}
where $c=c(n,p,q)>0$ is a constant independent of $u$ and $r$.
\end{theorem}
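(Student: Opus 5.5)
We argue by approximation followed by a Moser iteration on the individual partial derivatives, in the spirit of the orthotropic $p$-Laplacian theory (Bousquet--Brasco and coauthors), adapted to the N-function setting. \emph{First}, fix $B_{2r}\Subset\Omega$. Mollify $u$ to get $u^\epsilon$, and regularize $\phi$ to $\phi_\epsilon(t):=\phi(t)+\epsilon t^2$, suitably smoothed so that $\phi_\epsilon\in C^2$. By Assumption~\ref{assump:growth2} with $2\le p\le q$, $\phi_\epsilon$ keeps the bounds $(p-1)\phi_\epsilon'(t)\lesssim t\phi_\epsilon''(t)\lesssim (q-1)\phi_\epsilon'(t)$ uniformly in $\epsilon$. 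Let $u_\epsilon$ minimize $\sum_i\int_{B_{2r}}\phi_\epsilon(|v_{x_i}|)\,dx$ with $v=u^\epsilon$ on $\partial B_{2r}$. The functional is uniformly elliptic for fixed $\epsilon$, so standard theory gives $u_\epsilon\in W^{1,\infty}_{\loc}\cap W^{2,2}_{\loc}$. Strict convexity and lower semicontinuity give $u_\epsilon\to u$, and it then suffices to prove \eqref{estimate:Lip} for $u_\epsilon$ with a constant independent of $\epsilon$.

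\emph{Second}, differentiate the regularized Euler--Lagrange equation in the direction $x_j$ and test with $\eta^2\,\Psi(|\partial_j u_\epsilon|)\,\partial_j u_\epsilon$, where $\Psi$ is a power-type function of $\phi$ (for instance $\Psi(t)t^2\approx\phi(t)^{\alpha}$). This gives a Caccioppoli inequality of the form
\[
\sum_i\int \eta^2\phi''(|\partial_i u|)\,|\partial_{ij}u|^2\,\Psi'(\cdot)\ldots \lesssim \sum_i\int |\nabla\eta|^2 \phi'(|\partial_i u|)|\partial_iu|\,\Psi(|\partial_j u|)|\partial_ju|^2\ldots
\]
The right-hand side mixes directions $i\neq j$. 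Following the orthotropic approach, I would also test with products $\Psi_1(|\partial_j u|)\Psi_2(|\partial_k u|)$ to obtain \emph{mixed-direction} Caccioppoli inequalities. These control $\int \eta^2 \phi''(|u_{x_i}|)|\partial_i(\mathcal G(u_{x_j}))|^2$, where $\mathcal G$ is built from $\phi$. Applying the Sobolev inequality in each direction separately (an anisotropic Sobolev or Troisi inequality) and summing over $j$ then yields a reverse Hölder step
\[
\Big(\int_{B_{\rho'}} \textstyle\sum_j \phi(|u_{x_j}|)^{\alpha\chi}\Big)^{1/\chi}\le C(\rho-\rho')^{-2}\alpha^{c}\int_{B_\rho}\textstyle\sum_j \phi(|u_{x_j}|)^{\alpha+\beta},
\]
with some loss $\beta$. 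The loss appears because the weight $\phi''(|u_{x_i}|)$ pairs with a different component than the one being raised to a power.

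\emph{Third}, iterate over exponents $\alpha_k\to\infty$. Since $\phi$ is not a power, there is no scale invariance, so the exponents do not fit exactly. The doubling bounds $\phi(\lambda t)\le \lambda^q\phi(t)$ and $\phi(\lambda t)\ge\lambda^p\phi(t)$ are used to compare $\phi(t)^{\alpha}$ with $t^{p\alpha}$ and $t^{q\alpha}$. The gap between consecutive exponents is bridged by interpolating the intermediate norm between $L^{\alpha_k}$ and $L^\infty$ via Hölder, followed by Young's inequality with a small parameter. The resulting $\|\phi(|\nabla u|)\|_{L^\infty}$ term is absorbed through a standard iteration lemma on shrinking radii between $r/2$ and $r$. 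Running the scheme gives
\[
\sup_{B_{r/2}}\phi(|\nabla u_\epsilon|)\le C\fint_{B_{r}}\phi(|\nabla u_\epsilon|)+\ldots
\]
(after normalizing via $\phi^{-1}$). Letting $\epsilon\to0$ and using $\fint\phi(|\nabla u^\epsilon|)\to\fint\phi(|\nabla u|)$ on $B_{2r}$ yields \eqref{estimate:Lip}.

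\textbf{Main obstacle.} I expect the hardest part to be the mixed-direction Caccioppoli inequality combined with the interpolation step. One must show that the cross terms, in which $\phi''(|u_{x_i}|)$ weighs derivatives of $u_{x_j}$, can be controlled using only $\Delta_2$/$\nabla_2$-type bounds on $\phi$. The constants must grow at most polynomially in the iteration exponent, so that the Moser product converges. My first attempt would treat $\phi(t)^\alpha$ as a surrogate for $t^{p\alpha}$, with constants tracked through $q/p$.
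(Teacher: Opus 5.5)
Your approximation and the final limit-and-rescale step are fine in outline. However, the core of the argument, the passage from Caccioppoli inequalities to a reverse H\"older inequality, has a genuine gap.

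\textbf{The unweighted gradient.} Testing the $x_j$-differentiated equation with functions of $u_{x_j}$, as in \eqref{ineq:Caccioppoli}, controls $\sum_i\int\psi_\epsilon''(u_{x_i})\,u_{x_ix_j}^2\,G'(u_{x_j})^2\eta^2\,dx$. For $i\neq j$, the weight belongs to $u_{x_i}$ and the multiplier to $u_{x_j}$. In general the weight has no lower bound uniform in $\epsilon$ where $u_{x_i}$ is small. So these terms are not the square of an unweighted derivative of any single function. A Troisi-type anisotropic Sobolev inequality applied direction by direction therefore has nothing to act on, and an unspecified exponent loss $\beta$ cannot replace a missing derivative.

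The idea you are missing is Lemma~\ref{lem:wCaccioppoli} and Proposition~\ref{prop:caccioppoli}:
\begin{itemize}
\item Test with $u_{x_j}F(u_{x_j}^2)G(u_{x_k}^2)\eta^2$, where $F(t)=t^{s-1}$ and $G(t)=t^m$ are \emph{pure powers}, and take $\theta=\frac{m-s}{m-1}$.
\item Run the dyadic staircase $s_\ell=2^\ell$, $m_\ell=\bar\ell+1-2^\ell$, and close it with \eqref{ineq:Caccioppoli} applied to $|u_{x_j}|^{\bar\ell+1}$. This controls $\sum_i\int\psi_\epsilon''(u_{x_i})u_{x_ix_j}^2|u_{x_k}|^{2\bar\ell}\eta^2$, whose multiplier is now a power of $u_{x_k}$.
\item Keep only the term $i=k$ and use $u_{x_kx_j}=\partial_j u_{x_k}$. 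This gives the full \emph{unweighted} gradient of $|u_{x_k}|^{\bar\ell}\psi_\epsilon(u_{x_k})^{1/2}$, to which the ordinary Sobolev inequality applies.
\end{itemize}

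This is also the true source of the gaps. Only $\bar\ell=2^{\ell_0}-1$ is admissible, so consecutive exponents $\gamma_j=2^{j+2}-2$ are not linked by the Sobolev gain. The non-homogeneity of $\phi$ has nothing to do with it. The paper bridges the gaps by interpolating between $L^{\gamma_{j-1}}$ and the Sobolev exponent $L^{2^*\gamma_j/2}$, followed by Young's inequality and Lemma~\ref{lem:tec}. $L^\infty$ enters only at the very end.

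\textbf{The Orlicz bookkeeping.} Your treatment of the Orlicz structure would introduce a hypothesis the theorem does not have. If you pass between $\phi(t)^\alpha$ and $t^{p\alpha}$, $t^{q\alpha}$ inside the iteration, each step loses a factor $q/p$ in the exponent. Moser's scheme then advances only if $q/p<2^*/2$. Moreover, $\phi$-based multipliers would destroy the exact identities $2m_\ell-2s_\ell=2m_{\ell+1}$ and $G'(t)^\theta=m^\theta t^{m-s}$ that make the staircase close.

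The paper never compares $\phi$ with powers. The test multipliers are pure powers, and $\phi$ enters only through the weight $\psi_\epsilon''$ and the single factor $\psi_\epsilon(u_{x_k})^{1/2}$. What makes this lossless is the hypothesis $p\ge2$, which you use only for the regularization. It makes $\psi_\epsilon''$ almost increasing, so with $\mathcal U=\max_k|u_{x_k}|$ and \eqref{eq:g'g''} one gets $\psi_\epsilon''(u_{x_i})|u_{x_j}|^{2\bar\ell+2}\lesssim\mathcal U^{2\bar\ell}\psi_\epsilon(\mathcal U)$ for all $i,j$, with no loss. Moser's iteration then runs on the moments $\int\mathcal U^{\gamma_j}\,d\nu$ of the fixed measure $d\nu=\psi_\epsilon(\mathcal U)\,dx$. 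The final descent from $\int\mathcal U^2\psi_\epsilon(\mathcal U)$ to $\int\psi_\epsilon(|\nabla u^\epsilon|)$ uses three ingredients: the higher integrability of Lemma~\ref{lem:high1}, interpolation against $\|\nabla u^\epsilon\|_{L^\infty}$, and Lemma~\ref{lem:tec}.
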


For the model case $\phi(t)=t^{p}/p$, estimate \eqref{estimate:Lip} reduces to the natural local gradient bound for orthotropic $p$-harmonic functions. 
The study of the local regularity for minimizers of orthotropic functionals goes back to the pioneering work of Ural'tseva and Urdaletova \cite{UU}, who used Bernstein-type techniques to establish gradient bounds for the classical functional 
%\begin{equation}\label{power0}
\[
\mathfrak{F}_0(u ; \Omega):= \sum_{i=1}^n \int_{\Omega} |u_{x_i}|^p \,dx.
\]
%\end{equation}
 In recent years, a widely degenerate variant of the orthotropic functional \cite{bracarliersant10,bracarlier14}:
%\begin{equation}\label{powerdelta}
\[
\mathfrak{F}_{\delta}(u ; \Omega):= \sum_{i=1}^n \int_{\Omega} (u_{x_i}-\delta_i)_+^p \,dx
\]
%\end{equation}
has emerged in connection with variational models for traffic and optimal transport flows. Earlier works established only almost Lipschitz regularity for minimizers, meaning that the gradients of the solutions were shown to be integrable to any power $\gamma<\infty$. A major advance was achieved by Bousquet, Brasco, and  Julin  \cite{BouBraJul16}, who proved genuine Lipschitz regularity. 
 See also \cite{BouBra18} for a $C^1$-regularity result in the plane.
 However, while their result held for any $p \ge 2$ in the two-dimensional setting, it was restricted to $p \ge 4$ in higher dimensions.

A complete generalization of the previous papers occurred with the paper \cite{BouBraLeoVer18}, where the Lipschitz regularity was proved in all dimensions and with a forcing term.
The proof is not merely an adaptation of standard $p$-Laplacian techniques. The key novelty is a new family of mixed-direction Caccioppoli inequalities and a carefully designed Moser iteration scheme that couples different gradient components. These tools allow the authors to overcome the severe orthotropic degeneracy that had restricted earlier results to lower dimensions.
We also refer to Lipschitz regularity results  \cite{BouBra20,BouBraLeo24,Lieberman05} for stronger orthotropic functionals $\int_\Omega \sum_{i=1}^n |u_{x_i}|^{p_i}\,dx$ with $1< p_i<\infty$, \cite{CupMarMas09} for orthotropic functionals with variable exponents $\int_\Omega \sum_{i=1}^n |u_{x_i}|^{p_i(x)}\,dx$, and \cite{ALM2026}  for double phase orthotropic functionals  $\int_\Omega \sum_{i=1}^n |u_{x_i}|^{p}+a(x)|u_{x_i}|^{q}\,dx$ with $2\le p\le q \le p(1+\frac{\alpha}{n})$ and $a\in W^{1,\frac{n}{1-\alpha}}$. 
%%%%%
The purpose of this paper is to extend the Lipschitz regularity theory to the general Orlicz setting. 
We assume that the growth function is the same in all coordinate directions, but we replace the power function $t^{p}$ by a general N-function $\phi \in C^1([0,\infty))$ satisfying a $(p,q)$-growth condition. 
This setup includes not only the standard power case but also nonhomogeneous examples like $\phi(t) = \frac{1}{2}(t^p + t^q)$ or $\max\{t^p,t^q\}$ for $2 \le p < q$, and functions whose growth rates vary between $p$ and $q$.

Moving from power growth to general Orlicz growth brings a structural difficulty: the loss of homogeneity. 
In the power case, exact homogeneity gives nice algebraic identities, which allow us to control the energy estimates easily using fixed powers of the derivatives. 
For a general N-function $\phi$, we no longer have such identities. 
Therefore, the nonlinear terms arising from the differentiated equation must be controlled by using the lower and upper growth indices of $\phi$, instead of a single power exponent. 
Moreover, since the N-function $\phi$ is not smooth, or even $C^2$, we need to apply a very delicate approximation argument.

We follow the strategy developed in~\cite{BouBraLeoVer18}, while facing the additional difficulty of dealing with functions that lack homogeneity at any scale. 
Specifically, we approximate the local minimizer by smooth solutions $u_{\epsilon}$ of uniformly elliptic problems, ensuring that the resulting estimates on $u_{\epsilon}$ are stable with respect to $\epsilon$. 
By differentiating the regularized equation with respect to $x_j$, we establish a new family of Caccioppoli inequalities. 
We then initiate a Moser-type iteration for convex powers of the partial derivatives $u_{x_k}$. 
Finally, we apply Sobolev inequality and we fill the holes of the Moser's scheme.

The paper is organized as follows. In Section~\ref{sect:preliminaries}, we collect the basic properties of N-functions, Orlicz-Sobolev spaces, and the higher integrability estimates needed throughout the proof. We then introduce the uniformly elliptic regularized problems and establish energy estimates that are uniform in the regularization parameter. The subsequent sections are devoted to the differentiated equations, the standard and mixed-direction Caccioppoli inequalities, and the finite staircase argument. Finally, we perform the Moser iteration with hole-filling and pass to the limit in the approximation scheme to conclude the proof.

\section{Preliminaries and notation}\label{sect:preliminaries}
Let $f,g:E\to\mathbb{R}$ be measurable functions, where $E\subset\mathbb{R}^n$. 
For any measurable set $E$ satisfying $0<|E|<\infty$, we write
\[
(f)_E := \fint_E f\,dx = \frac{1}{|E|}\int_E f\,dx
\]
to denote the average of $f$ over $E$.
We write $f\lesssim g$ if there exists a constant $C>0$ such that
\[
f(y)\le Cg(y) \qquad \text{for all } y\in E.
\]
Moreover, $f\approx g$ means that both $f\lesssim g$ and $g\lesssim f$ hold.

Let $E\subset\mathbb{R}$. A function $f:E\to\mathbb{R}$ is said to be 
\textit{almost increasing} on $E$ with constant $L\ge1$ if
\[
f(s)\le Lf(t)
\qquad \text{whenever } s,t\in E \text{ and } s\le t.
\]
If the above inequality holds with $L=1$, then $f$ is said to be 
\textit{increasing} on $E$.
The notions of \textit{almost decreasing} and \textit{decreasing} are defined analogously.

\subsection{Orlicz functions} Throughout this paper, let $\phi:[0,\infty)\to[0,\infty)$ be an N-function satisfying
$
\phi(0)=0.
$
We assume that $\phi$ admits a right-continuous derivative $\phi'$, that $\phi'$ is increasing, and that
$\phi'(0)=0,\ 
\phi'(t)>0$ for all $t>0.$
For convenience, we normalize $\phi$ by assuming that $\phi(1)=1$. 
Otherwise, the constants appearing in the estimates may additionally depend on $\phi(1)$.

Moreover, we impose the following growth condition on $\phi$.

\begin{assumption}\label{assump:growth}
There exist constants $1< p\le q<\infty$ such that $\frac{\phi(t)}{t^p}$ is almost increasing and $\frac{\phi(t)}{t^q}$ is almost decreasing for $t \in (0,\infty)$ with constant $L\ge 1$. That is, the following holds for any $t >0$ and $\lambda\ge 1$:
\begin{equation}\label{cond:growth-1}
L^{-1}\lambda^p \phi(t) \le \phi(\lambda t ) \le L \lambda^q \phi(t).
\end{equation}
\end{assumption}

The conjugate function associated with $\phi$ is defined by
\[
\phi^*(t):=\sup_{s\ge 0 } \big( st-\phi(s)\big).
\]
By definition, Young's inequality states that for every $s,t\ge0$,
\[
st\le \phi(t)+\phi^*(s).
\]
Assumption~\ref{assump:growth} implies that both $\phi$ and $\phi^*$ satisfy the $\Delta_2$-condition. Consequently,
\begin{equation}\label{equivalent}
\phi^*\bigg( \frac{\phi(t)}{t}\bigg) \sim \phi^*(\phi'(t)) \sim \phi(t), 
\quad \text{and thus}\quad
t \phi'(t) \sim \phi(t),
\end{equation}
where the implicit constants depend only on $p$, $q$, and $L$; see \cite[Theorem~2.4.10]{HarH19}.

For the regularity theory developed later, we will consider the following stronger assumption:
%We will consider the following stronger assumption:
\begin{assumption}\label{assump:growth2}
Let $\phi\in C^{1}([0,\infty))$. There exist constants $1<p\le q <\infty$ such that $\frac{\phi'(t)}{t^{p-1}}$ is increasing and $\frac{\phi'(t)}{t^{q-1}}$ is  decreasing for $t \in (0,\infty)$. That is, the following holds for any $t >0$ and $\lambda\ge 1$:
%\begin{equation}\label{cond:growth2-1}
\[
\lambda^{p-1} \phi'(t) \le \phi'(\lambda t ) \le  \lambda^{q-1} \phi'(t).
\]
%\end{equation}
\end{assumption}
Indeed, the previous assumption will be preserved under convolution and future approximations.

We note that
if $\phi\in C^1([0,\infty))\cap C^{2}((0,\infty))$,  then Assumption~\ref{assump:growth2} is equivalent to the following inequality:
\begin{equation}\label{cond:growth2-2}
0<p-1 \le \frac{t\phi''(t)}{\phi'(t)} \le q-1 \quad \text{for all } \ t>0.
\end{equation}
Moreover, Assumption~\ref{assump:growth2} implies Assumption~\ref{assump:growth} with $L=1$ and the same exponents $p$ and $q$.

In the classical theory of Orlicz spaces, Assumption~\ref{assump:growth} is known as $\Delta_2$-condition
for both $\phi$ and its conjugate function $\phi^*$. 
 In this setting, 
 the Orlicz space $L^\phi(\Omega)$ is defined as the set of all measurable functions $f:\Omega \rightarrow \R$ satisfying
\[
\int_{\Omega} \phi(|f(x)|)\,dx < \infty.
\]
Under the above assumption, $L^\phi(\Omega)$ is a separable and reflexive Banach space.
The associated Orlicz-Sobolev space $W^{1,\phi}(\Omega)$  is defined as the set of all functions $f \in L^{\phi}(\Omega) \cap W^{1,1}(\Omega)$ satisfying 
\[
\int_{\Omega} \phi(|\nabla f(x)|)\,dx < \infty.
\]

\subsection{Higher integrability}
Suppose that the N-function $\phi$ satisfies Assumption~\ref{assump:growth}. We define the nonlinear operator $A=(A_1,A_2,\cdots,A_n):\R^n\to\R^n$ by
$$
A_i(\xi):= \frac{\phi'(|\xi_i|)}{|\xi_i|}\xi_i,
\qquad i=1,2,\cdots,n,
$$
where $\xi=(\xi_1, \xi_2, \cdots, \xi_n) \in \R^n$. Then
by \eqref{cond:growth-1} and \eqref{equivalent}, the operator $A$ satisfies the standard $\phi$-growth and coercivity conditions:
$$
|A(\xi)| \le \bigg(\sum_{i=1}^n \phi'(|\xi_i|)^2\bigg)^{\frac{1}{2}}\le \tilde L \phi'(|\xi|)
$$
and
$$
A(\xi)\cdot\xi = \sum_{i=1}^n \phi'(|\xi_i|)|\xi_i|  \ge \tilde\nu \phi(|\xi|) 
$$
for some constants $0<\tilde \nu \le \tilde L$ depending on $n$, $p$, $q$, and $L$ .  

Note that the orthotropic equation \eqref{mainPDE} can be  rewritten in divergence form as
$$
\sum_{i=1}^n \left(\frac{\phi' (|u_{x_i}|)}{|u_{x_i}|}u_{x_i}\right)_{x_i}= \mathrm{div} A(Du)=0 
\ \ \text{ in } \Omega.
$$
Therefore, by standard regularity theory for the above problem (see, e.g.,  \cite{CiaFus99}), we have the following higher integrability estimates.

\begin{lemma}\label{lem:high}
Let $\phi$ be an N-function satisfying Assumption~\ref{assump:growth} and $u \in W^{1,\phi}_{\loc}(\Omega)$ be  a local weak solution of \eqref{mainPDE}. Then, there exists $\sigma=\sigma(p,q,n)>0$ such that $\phi(|\nabla u|)\in L^{1+\sigma}_{\loc}(\Omega)$  with the estimate: for any $B_{2r}\Subset \Omega$,
%\begin{equation}\label{eq:high}
\[
\fint_{B_r} \phi(|\nabla u|)^{1+\sigma}\,dx \le c \left(\fint_{B_{2r}} \phi(| \nabla u|)\,dx\right)^{1+\sigma}
\]
%\end{equation}
for some $c=c(n,p,q)>0$.
\end{lemma}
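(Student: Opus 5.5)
The plan is the classical Gehring-type route: a Caccioppoli inequality, a Sobolev--Poincaré inequality in Orlicz form, and Gehring's lemma.

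\textbf{Step 1 (Caccioppoli inequality).} Fix a ball $B_{2\rho}(x_0)\Subset\Omega$ and a cutoff $\eta\in C_0^\infty(B_{2\rho})$ with $\eta\equiv1$ on $B_\rho$ and $|\nabla\eta|\le 2/\rho$. Test the weak formulation of $\operatorname{div}A(Du)=0$ with $\zeta=\eta^q(u-(u)_{B_{2\rho}})$; this is admissible by density, since $\phi$ and $\phi^*$ satisfy $\Delta_2$. Two ingredients handle the resulting terms:
\begin{itemize}
\item the coercivity $A(\xi)\cdot\xi\ge\tilde\nu\,\phi(|\xi|)$ bounds the principal term from below;
\item the growth $|A(\xi)|\le\tilde L\,\phi'(|\xi|)$, combined with Young's inequality for $\phi,\phi^*$ and the equivalence \eqref{equivalent}, controls the cross term.
\end{itemize}
Bound the cross term by
\[
\eta^{q-1}\phi'(|\nabla u|)\,\frac{|u-(u)|}{\rho}\le \epsilon\,\eta^{q}\phi(|\nabla u|)+c_\epsilon\,\phi\Big(\frac{|u-(u)|}{\rho}\Big).
\]
The factor $\eta^q$ is absorbed using $\phi^*(\eta^{q-1}s)\le c\,\eta^{q}\phi^*(s)$, which follows from the $\Delta_2$ properties. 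The outcome is
\[
\fint_{B_\rho}\phi(|\nabla u|)\,dx\le c\fint_{B_{2\rho}}\phi\Big(\frac{|u-(u)_{B_{2\rho}}|}{\rho}\Big)dx,
\]
with $c=c(n,p,q,L)$.

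\textbf{Step 2 (Sobolev--Poincaré).} Use the Orlicz Sobolev--Poincaré inequality with an exponent below one. For some $\theta=\theta(n,p,q)\in(0,1)$,
\[
\fint_{B_{2\rho}}\phi\Big(\frac{|u-(u)|}{\rho}\Big)dx\le c\Big(\fint_{B_{2\rho}}\phi(|\nabla u|)^{\theta}dx\Big)^{1/\theta}.
\]
One way to obtain it: the function $\psi(t)=\phi(t)^{\theta}$ is almost increasing after dividing by $t^{p\theta}$. Choosing $\theta$ with $p\theta>1$ and $p\theta$ close to $1$, the function $\psi$ is equivalent to a convex function, so Jensen's inequality applies. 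Then apply the Poincaré inequality for $\psi$, which is a modular inequality valid under $\Delta_2$ for $\psi$ and $\psi^*$. Finally, improve the exponent on the left-hand side from $\psi$ to $\psi^{1/\theta}=\phi$ by the Sobolev gain, using $q\theta<p\theta\,n/(n-p\theta)$ for $\theta$ small, or more simply the known result in \cite{HarH19}. This is the main technical point: getting a reverse Hölder inequality with a sub-unit exponent, uniformly in $p,q$.

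\textbf{Step 3 (Gehring).} Combining the two steps gives, for all concentric balls,
\[
\fint_{B_\rho}g\,dx\le c\Big(\fint_{B_{2\rho}}g^{\theta}dx\Big)^{1/\theta},\qquad g=\phi(|\nabla u|)\in L^1_{\loc}.
\]
Gehring's lemma, in Giaquinta--Modica's local form, then yields $\sigma>0$ and the stated estimate on $B_r$ from $B_{2r}$ after a covering argument. The constants depend on $n,p,q$, with $L=1$ under Assumption~\ref{assump:growth2}; alternatively, quote \cite{CiaFus99} directly.
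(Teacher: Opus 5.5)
Your overall route (Caccioppoli, a Sobolev--Poincar\'e inequality with an exponent $\theta<1$ inside, then Gehring) is the standard argument behind the paper's one-line appeal to \cite{CiaFus99}. Steps 1 and 3 are fine as written, including the absorption via $\phi^*(\eta^{q-1}s)\le c\,\eta^q\phi^*(s)$ and your remark that the constants really depend on $L$.

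The problem is your justification of Step 2, which you yourself single out as the main point. The condition $q\theta<p\theta\,n/(n-p\theta)$ is equivalent to $\theta>n(q-p)/(pq)$. This is a \emph{lower} bound on $\theta$, so it fails for small $\theta$ as soon as $q>p$. For $p<n$ it forces $q<np/(n-p)$ whatever $\theta\le 1$ you take. With your choice $p\theta\approx 1$ it forces $q<np/(n-1)$. So your argument would prove the lemma only under a restriction on $q/p$. The lemma does not assume such a restriction, and the paper cannot afford one, since Theorem~\ref{thm:main} imposes no relation between $p$ and $q$. Comparing the upper index $q$ of $\phi$ with the Sobolev exponent of the lower index of $\psi$ is the wrong comparison. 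Because $\phi=\psi^{1/\theta}$ exactly, you only need to gain the fixed power $1/\theta$ of $\psi$ itself, and $q$ should enter only through doubling constants.

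The workable choice is $\theta$ close to $1$, not $p\theta$ close to $1$: take $\max\{1/p,(n-1)/n\}<\theta<1$. Let $\psi$ be a convex doubling function equivalent to $\phi^\theta$; it exists since $p\theta\ge1$. For a ball $B$ of radius $R$ and a.e. $x\in B$ you have $|u(x)-(u)_B|\le c(n)\int_B|\nabla u(y)|\,|x-y|^{1-n}\,dy$ and $\int_B|x-y|^{1-n}\,dy\approx R$. Jensen's inequality with respect to the probability measure $|x-y|^{1-n}dy/\int_B|x-z|^{1-n}dz$ then gives $\psi(|u(x)-(u)_B|/R)\le c\,R^{-1}\int_B\psi(|\nabla u(y)|)\,|x-y|^{1-n}\,dy$. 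Next apply Minkowski's integral inequality in $L^{1/\theta}(B)$, which is legitimate because $(n-1)/\theta<n$. This yields $\big(\fint_B\psi(|u-(u)_B|/R)^{1/\theta}\,dx\big)^{\theta}\le c\fint_B\psi(|\nabla u|)\,dx$. That is the inequality of Step 2, with $c=c(n,p,q,L)$ and no condition linking $p$ and $q$; it is the Sobolev--Poincar\'e inequality of Diening--Ettwein \cite{DieEtt08}. With Step 2 justified this way, or simply quoted from \cite{DieEtt08}, the rest of your proof goes through.
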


\section{Regularization}
Suppose that $\phi\in C^1([0,\infty))$ satisfies Assumption~\ref{assump:growth2}. We regularize the function $\phi$  in two steps. First, we use mollification to obtain a  smooth function $\tilde \phi_\epsilon$ for $\epsilon>0$; secondly, we deal with a modified shifted function in order to obtain a non-degenerate function $\psi_\epsilon$. It turns out that these regularizations are such that the corresponding equations satisfy the same exponents in Assumption~\ref{assump:growth2}.

Let $\epsilon\in(0,1)$ be small. We first define the regularized function $\tilde\phi_\epsilon$, associated with $\phi$, by 
\[
\tilde\phi'_{\epsilon}(t) 
:=  \int_0^\infty \phi'(s t) \varrho_{\epsilon}(s-1)\,ds 
= \int_0^\infty \phi'(s)\varrho_{\epsilon t}(s-t)\, ds
\quad\text{and}\quad
\tilde\phi_\epsilon(t):= \int_0^t \tilde\phi'_{\epsilon}(s)\,ds,
\]
where $\varrho\in C^\infty_0(\R)$ satisfies $\varrho\ge 0$, $\mathrm{supp}\, \varrho \subset (0,1)$ and $\|\varrho\|_{L^1(\R)} =1$, and $\varrho_{r}(t):=\frac{1}{r}\varrho(\frac{t}{r})$. Then we can infer  that $\tilde\phi_\epsilon\in C^1([0,\infty))\cap C^\infty((0,\infty))$.

Using the same argument as in \cite[Proposition~5.10]{HasO22}, we can obtain the following lemma. 

\begin{lemma} \label{lem:tphi}
Let $\tilde \phi_\epsilon \in  C^1([0,\infty))\cap C^\infty((0,\infty))$ be defined as above.
\begin{itemize}
\item[$\mathrm{(i)}$] $\tilde\phi_\epsilon$ satisfies Assumption~\ref{assump:growth2}, and thus \eqref{cond:growth2-2}, with $\phi$ replaced by $\tilde\phi_\epsilon$.
\item [$\mathrm{(ii)}$] $\phi'(t)\le \tilde\phi'_{\epsilon}(t) \le  (1+\epsilon)^{q-1}\phi'(t)$, and hence $\phi(t)\le \tilde\phi_{\epsilon}(t) \le  (1+\epsilon)^{q-1}\phi(t)$,  for all $t\ge0$.
\end{itemize}\end{lemma}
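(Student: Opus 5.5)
The proof rests on the first representation $\tilde\phi'_\epsilon(t)=\int_0^\infty \phi'(st)\varrho_\epsilon(s-1)\,ds$. This exhibits $\tilde\phi'_\epsilon$ as an average, with respect to the probability measure $\varrho_\epsilon(s-1)\,ds$ supported in $s\in(1,1+\epsilon)$, of the dilates $t\mapsto\phi'(st)$. Both properties in the lemma are preserved by such averaging, and this is the whole mechanism.

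\textbf{Part (i).} Fix $t>0$ and $\lambda\ge1$. Assumption~\ref{assump:growth2} applied at the point $st$ gives, for every $s>0$,
\[
\lambda^{p-1}\phi'(st)\le \phi'(\lambda st)\le \lambda^{q-1}\phi'(st).
\]
Multiply by $\varrho_\epsilon(s-1)\ge0$ and integrate in $s$. This yields
\[
\lambda^{p-1}\tilde\phi'_\epsilon(t)\le\tilde\phi'_\epsilon(\lambda t)\le\lambda^{q-1}\tilde\phi'_\epsilon(t),
\]
which is Assumption~\ref{assump:growth2} for $\tilde\phi_\epsilon$.

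It remains to check the side conditions. First, $\tilde\phi'_\epsilon(0)=\phi'(0)=0$. Next, $\tilde\phi'_\epsilon$ is continuous on $[0,\infty)$ by dominated convergence, since $\phi'$ is continuous and the integrand is bounded on compacts. It is positive and increasing because $\phi'$ is. Finally, $\tilde\phi'_\epsilon$ is $C^\infty$ on $(0,\infty)$ by the second representation $\int\phi'(s)\varrho_{\epsilon t}(s-t)\,ds$, where the kernel is smooth in $t>0$ and can be differentiated under the integral.

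Once $\tilde\phi_\epsilon$ is known to be $C^2$ on $(0,\infty)$, \eqref{cond:growth2-2} follows from the equivalence noted after Assumption~\ref{assump:growth2}. Concretely, differentiate $\lambda\mapsto\tilde\phi'_\epsilon(\lambda t)/\lambda^{p-1}$ at $\lambda=1$, and argue likewise with $q$.

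\textbf{Part (ii).} Since $\phi'$ is increasing and $s\ge1$ on the support, $\phi'(st)\ge\phi'(t)$. Since also $s\le1+\epsilon$, Assumption~\ref{assump:growth2} with $\lambda=s$ gives $\phi'(st)\le s^{q-1}\phi'(t)\le(1+\epsilon)^{q-1}\phi'(t)$. Integrating against the probability density yields
\[
\phi'(t)\le\tilde\phi'_\epsilon(t)\le(1+\epsilon)^{q-1}\phi'(t).
\]
Integrating this from $0$ to $t$, and using $\phi(0)=\tilde\phi_\epsilon(0)=0$, gives the corresponding bounds for $\tilde\phi_\epsilon$ and $\phi$.

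\textbf{Main obstacle.} The only delicate step is the regularity claim $\tilde\phi_\epsilon\in C^\infty((0,\infty))$, which requires differentiating under the integral with a $t$-dependent kernel $\varrho_{\epsilon t}(s-t)$. This is handled by local uniform bounds on the $t$-derivatives of the kernel for $t$ in compact subsets of $(0,\infty)$, together with the local boundedness of $\phi'$.
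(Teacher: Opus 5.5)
Your proof is correct and follows essentially the same route as the paper. For (i) you integrate the growth inequalities for $\phi'$ at the point $st$ against the probability density $\varrho_\epsilon(s-1)$, and for (ii) you use that this density is supported in $s\in(1,1+\epsilon)$ together with the monotonicity of $\phi'$ and the $q$-growth bound; your extra checks (continuity, smoothness via the second representation, and deriving \eqref{cond:growth2-2} by differentiating at $\lambda=1$) are details the paper takes for granted.
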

\begin{proof}
(i) It suffices to show that   $\lambda^{p-1}\tilde\phi'_{\epsilon}(t)\le \tilde\phi'_{\epsilon}(\lambda t)\le \lambda^{q-1}\tilde\phi'_{\epsilon}(t)$ for all $\lambda\ge 1$.  Since $\lambda^{p-1}\phi'(t)\le \phi'(\lambda t)\le \lambda^{q-1}\phi'(t)$ for $\lambda\ge1$,
 we have that \[
\tilde\phi'_{\epsilon}(\lambda t) = \int_0^\infty \phi'(s \lambda t) \varrho_{\epsilon}(s-1)\,ds 
\ge \lambda^{p-1}\int_0^\infty \phi'(s t) \varrho_{\epsilon}(s-1)\,ds = \lambda^{p-1}  \tilde\phi'_{\epsilon}(t)
\]
and
\[
\tilde\phi'_{\epsilon}(\lambda t) = \int_0^\infty \phi'(s \lambda t) \varrho_{\epsilon}(s-1)\,ds 
\le \lambda^{q-1}\int_0^\infty \phi'(s t) \varrho_{\epsilon}(s-1)\,ds = \lambda^{q-1} \tilde \phi'_{\epsilon}(t).
\]

(ii) Since $\mathrm{supp}\,\varrho_\epsilon \subset(0,\epsilon)$ with $\int_0^\epsilon \varrho_\epsilon(s)\,ds=1$,  $\phi'(t)$ is increasing in $t$, and $\tilde\phi'_{\epsilon}(t)/t^{q-1}$ is decreasing in $t$  by (i), we have that
\[
\tilde \phi'_{\epsilon}( t) = \int_{1}^{1+\epsilon} \phi'(s t) \varrho_{\epsilon}(s-1)\,ds \ge \phi'(t) \int_{1}^{1+\epsilon} \varrho_{\epsilon}(s-1)\,ds =\phi'(t),
\]
and 
\[
\tilde\phi'_{\epsilon}( t)  \le \phi'((1+\epsilon)t) \int_{1}^{1+\epsilon} \varrho_{\epsilon}(s-1)\,ds =\phi'((1+\epsilon)t)\le (1+\epsilon)^{q-1}\phi'(t).%\qedhere
\]
The resulting estimates for $\phi$ and $\tilde\phi_\epsilon$ are obtained directly by integrating the above inequalities.

\end{proof}

For $t\in\mathbb{R}$, we further define
\begin{equation}\label{def:g}
\psi_{\epsilon}'(t) := \frac{\tilde\phi'_{\epsilon}\big((\epsilon^2 + t^2)^{\frac12}\big)}{(\epsilon^2 + t^2)^{\frac12}} t 
\quad\text{and}\quad
\psi_{\epsilon}(t) := \int_{0}^{t} \psi_{\epsilon}'(s)\,ds= \tilde\phi_\epsilon\big((\epsilon^2 + t^2)^{\frac12}\big) - \tilde\phi_{\epsilon}(\epsilon).
\end{equation}
Note that $\psi_\epsilon$ is a smoothed version of the shifted N-function introduced in \cite{DieEtt08}, corresponding to $\tilde\phi_\epsilon$ with shift $\epsilon$. 
By the definition of $\psi_\epsilon$ and Lemma~\ref{lem:tphi}(ii), we infer that $v\in W^{1,\psi_\epsilon}(U)$ if and only if $v\in W^{1,\phi}(U)$ for any bounded open set $U\subset \R^n$.
Moreover, we directly see that 
\begin{equation}\label{eq:g'}
|\psi_{\epsilon}'(t)| = \frac{\tilde\phi'_{\epsilon}\big((\epsilon^2 + t^2)^{\frac12}\big)}{(\epsilon^2 + t^2)^{\frac12}} |t| \le  \tilde\phi'_{\epsilon}\big((\epsilon^2 + t^2)^{\frac12}\big)\le  \tilde\phi'_{\epsilon}\big(\epsilon + |t| \big)
\end{equation}
and that, since
\[\psi_{\epsilon}''(t) =\left\{\left( \frac{\tilde\phi''_{\epsilon}\big((\epsilon^2 + t^2)^{\frac12}\big)(\epsilon^2 + t^2)^{\frac12}}{\tilde\phi'_{\epsilon}\big((\epsilon^2 + t^2)^{\frac12}\big)} -1 \right)\frac{t^2}{\epsilon^2 + t^2}+ 1\right\} \frac{\tilde\phi'_{\epsilon}\big((\epsilon^2 + t^2)^{\frac12}\big)}{(\epsilon^2 + t^2)^{\frac12}} ,
\]
by using \eqref{cond:growth2-2} with $\phi$ replaced by $\tilde\phi_\epsilon$,  
\begin{equation}\label{eq:g''}
\min\{p-1,1\}\frac{\tilde\phi'_{\epsilon}((\epsilon^2 + t^2)^{\frac12})}{(\epsilon^2 + t^2)^{\frac12}} \le \psi_{\epsilon}''(t) \le \max\{q-1,1\}\frac{\tilde\phi'_{\epsilon}((\epsilon^2 + t^2)^{\frac12})}{(\epsilon^2 + t^2)^{\frac12}},
\end{equation}
and hence
\begin{equation}\label{eq:g'g''}
\min\{p-1,1\} \le \frac{t \psi_{\epsilon}''(t) }{\psi_{\epsilon}'(t)} \le \max\{q-1,1\},
\end{equation}
for any  $t\in \R\setminus\{0\}$.

We are now approximating the original problem with a sequence of  minimizers related to $\epsilon$-regularized functionals. It turns out that the minimizers $u_{\epsilon}$ have the gradients locally convergent in $L^{\phi}$ to the minimizer $u$ of the original problem. \par
Let $u\in W^{1,\phi}_{\loc}(\Omega)$ be a local minimizer of \eqref{main_functional}, $B_{2r} \Subset \Omega$,  and $\epsilon_0:= \min\{1,\mathrm{dist}(B_{2r},\partial \Omega)\}$. For  $\epsilon\in(0,\epsilon_0)$,
%satisfying \eqref{assump:normal}. 
we consider the
minimizer $u^\epsilon\in U^{\epsilon}+W^{1,\phi}_0(B_{2r})$ of the functional 
\begin{equation}\label{eq:regularizedenergy}
\mathfrak{F}_{\psi_\epsilon}(v ; B_{2r}):= \sum_{i=1}^n \int_{B_{2r}}\psi_\epsilon ( v_{x_i}) \, dx, 
\quad  v\in U^{\epsilon}+W^{1,\phi}_0(B_{2r}),
\end{equation}
where $U^\epsilon(x):=u*m_\epsilon (x)$ for $x\in B_{2r}$ and $m_\epsilon(x):=\epsilon^{-n}m(x/\epsilon)$ with $\epsilon>0$ is the standard mollifier supported in $B_\epsilon(0)$. 
Note that  $U^\epsilon \to u$ as $\epsilon\to 0^+$ in $L^\phi(B_{2r})$, and $u^\epsilon$ is the unique weak solution to the following regularized and nondegenerate problem:
\begin{equation}\label{PDE_regularized_BC}
\sum_{i=1}^n \big(\psi_{\epsilon}'(u^\epsilon_{x_i})\big)_{x_i}= 0 \ \ \text{ in } \ B_{2r}, 
\quad\text{and}\quad
u^\epsilon=U^\epsilon=u*m_\epsilon \ \ \text{ on } \ \partial B_{2r}.
\end{equation}

We observe that \begin{equation}\label{eq:energy_estimate0}
\sum_{i=1}^n \int_{B_{2r}}\psi_\epsilon ( u^{\epsilon}_{x_i}) \, dx \le  \sum_{i=1}^n \int_{B_{2r}}\psi_\epsilon ( U^{\epsilon}_{x_i}) \, dx  \le   (1+\epsilon)^{q-1} \sum_{i=1}^n \int_{B_{2r}}\phi (\epsilon+ |U^\epsilon_{x_i}|) \, dx,
\end{equation}
where the first inequality follows from the minimality of $u^\epsilon$,
while the second one follows from the definition of $\psi_\epsilon$
in \eqref{def:g} and Lemma~\ref{lem:tphi}(ii).
%by the minimality of $u^\epsilon$, the definition of $\psi_\epsilon$ in \eqref{def:g} and Lemma~\ref{lem:tphi}(ii).
Hence, by Lemma~\ref{lem:tphi}(ii),  \eqref{def:g}, \eqref{eq:energy_estimate0}, and the strong convergence of $U^\epsilon$, we infer that 
\begin{equation}\label{eq:energy_estimate1}
\begin{aligned}
\int_{B_{2r}}\phi (|\nabla u^\epsilon|)\,dx 
& \le  \int_{B_{2r}}\tilde\phi_\epsilon (|\nabla u^\epsilon|)\,dx  
 \le \int_{B_{2r}}\psi_\epsilon (|\nabla u^\epsilon|)+\tilde\phi_\epsilon(\epsilon) \, dx \\ 
& \lesssim  \int_{B_{2r}}\phi (|\nabla U^\epsilon|)+1\,dx \lesssim \int_{B_{2r}}\phi (|\nabla u|)+1\,dx
\end{aligned}
\end{equation}
for any sufficiently small $\epsilon\in(0,1)$.
Then we have the following convergence result.
\begin{proposition}[Convergence to a weak solution] \label{prop:strongconv}
Under the above setting, we have that $\nabla u^\epsilon \to \nabla u$ in $L^\phi(B_{2r},\R^n)$ as $\epsilon\to 0^+$.
\end{proposition}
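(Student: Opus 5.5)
The plan is a weak-compactness argument, followed by identification of the limit and an upgrade to strong convergence via a convexity/Young argument. By \eqref{eq:energy_estimate1}, $\{\nabla u^\epsilon\}$ is bounded in $L^\phi(B_{2r})$. Moreover $u^\epsilon-U^\epsilon\in W^{1,\phi}_0(B_{2r})$, so Poincaré's inequality in Orlicz spaces, which is available under Assumption~\ref{assump:growth}, bounds $u^\epsilon$ in $W^{1,\phi}(B_{2r})$. Since $L^\phi$ is reflexive, a subsequence satisfies $u^\epsilon\rightharpoonup v$ weakly in $W^{1,\phi}(B_{2r})$. Because $U^\epsilon\to u$ strongly in $W^{1,\phi}(B_{2r})$ and $W^{1,\phi}_0$ is weakly closed, we get $v\in u+W^{1,\phi}_0(B_{2r})$.

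Next we show that $v$ minimizes $\mathfrak{F}_\phi(\cdot;B_{2r})$ in $u+W^{1,\phi}_0(B_{2r})$, so that $\mathfrak F_\phi(v;B_{2r})=\mathfrak F_\phi(u;B_{2r})$. Lower semicontinuity is the first ingredient. Since $\phi$ is convex, weak lower semicontinuity gives
\[
\sum_i\int_{B_{2r}}\phi(|v_{x_i}|)\,dx\le\liminf_{\epsilon\to0}\sum_i\int_{B_{2r}}\phi(|u^\epsilon_{x_i}|)\,dx .
\]
To compare with $\psi_\epsilon$, use Lemma~\ref{lem:tphi}(ii) together with $\tilde\phi_\epsilon(|t|)\le\tilde\phi_\epsilon((\epsilon^2+t^2)^{1/2})$, which yield $\phi(|t|)\le\psi_\epsilon(t)+\tilde\phi_\epsilon(\epsilon)$. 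The last term is at most $2^{q-1}\epsilon^p\to0$.

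The upper bound follows from \eqref{eq:energy_estimate0}: $\limsup_\epsilon \mathfrak F_{\psi_\epsilon}(u^\epsilon)\le\lim_\epsilon (1+\epsilon)^{q-1}\sum_i\int\phi(\epsilon+|U^\epsilon_{x_i}|)\,dx=\mathfrak F_\phi(u;B_{2r})$. This limit is justified by $\Delta_2$ and the strong convergence of $\nabla U^\epsilon$, for instance through Vitali's theorem with the majorant $\phi(\epsilon+|U^\epsilon_{x_i}|)\lesssim\phi(|U^\epsilon_{x_i}|)+\phi(\epsilon)$. Combining the two bounds gives $\mathfrak F_\phi(v)\le\mathfrak F_\phi(u)$.

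Local minimality of $u$ extends by density, using $\Delta_2$ and continuity of the modular, from $C^1_0$ perturbations to $W^{1,\phi}_0(B_{2r})$ perturbations; here we use $B_{2r}\Subset\Omega$, and balls in the definition can be replaced by $B_{2r}$. Hence $\mathfrak F_\phi(u)\le\mathfrak F_\phi(v)$, so the two energies coincide. The same chain of inequalities then shows
\[
\lim_{\epsilon\to0}\mathfrak F_\phi(u^\epsilon;B_{2r})=\mathfrak F_\phi(v;B_{2r}).
\]

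Identifying $v=u$ needs strict convexity. $\phi$ is strictly convex because $\phi'$ is strictly increasing: Assumption~\ref{assump:growth2} gives $\phi'(\lambda t)\ge\lambda^{p-1}\phi'(t)$. Suppose $\nabla u\neq\nabla v$ on a set of positive measure. Then $\frac{u+v}{2}$ has strictly smaller energy in every component where $u_{x_i}\ne v_{x_i}$, contradicting minimality. Thus $\nabla u=\nabla v$, and since $u-v\in W^{1,\phi}_0$, we get $u=v$. Uniqueness of the limit makes the whole family converge.

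The main obstacle is the upgrade from weak convergence plus convergence of energies to strong convergence, since $\phi$ is not a power. I would apply the convexity trick componentwise. Set
\[
h_\epsilon:=\frac{\phi(|u^\epsilon_{x_i}|)+\phi(|u_{x_i}|)}{2}-\phi\Big(\Big|\frac{u^\epsilon_{x_i}+u_{x_i}}{2}\Big|\Big)\ge0 .
\]
Weak lower semicontinuity applied to the last term, together with the convergence of the total energies, gives $\limsup\int h_\epsilon\le0$, so $h_\epsilon\to0$ in $L^1$. To conclude that $\int\phi(|u^\epsilon_{x_i}-u_{x_i}|)\to0$ one needs a uniform-convexity-type inequality
\[
\phi\Big(\frac{|a-b|}{2}\Big)\lesssim\frac{\phi(|a|)+\phi(|b|)}{2}-\phi\Big(\Big|\frac{a+b}{2}\Big|\Big).
\]
This holds for $p\ge2$ via \eqref{cond:growth2-2}-type lower bounds $\phi''(t)\gtrsim\phi'(t)/t$. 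The inequality requires $C^2$, so I would prove it first for $\tilde\phi_\epsilon$-type smooth approximants, or argue directly from the monotonicity of $\phi'(t)/t^{p-1}$ with $p\ge 2$, which makes $\phi'(t)/t$ nondecreasing and yields the Clarkson-type bound. If that route fails, the fallback is $h_\epsilon\to0$ a.e. along a subsequence, which gives pointwise convergence of gradients, combined with equi-integrability of $\phi(|\nabla u^\epsilon|)$ (from energy convergence and Brezis–Lieb) and Vitali's theorem.
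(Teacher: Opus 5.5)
Your proof is correct, but it takes a genuinely different route from the paper's.

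\textbf{How the two routes differ.} The paper never uses weak compactness. It tests the Euler--Lagrange equations of $u$ and of $u^\epsilon$ with $u^\epsilon-U^\epsilon$ and shows that both error terms vanish. The term $I_1$ vanishes because $\nabla U^\epsilon\to\nabla u$ in $L^\phi$ while the fluxes stay bounded in $L^{\phi^*}$. The term $I_2$ vanishes by dominated convergence of $\psi_\epsilon'(u_{x_i})$ to $\phi'(|u_{x_i}|)u_{x_i}/|u_{x_i}|$. Together these give $\sum_i\int_{B_{2r}}(\psi'_\epsilon(u^\epsilon_{x_i})-\psi'_\epsilon(u_{x_i}))(u^\epsilon_{x_i}-u_{x_i})\,dx\to0$. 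The paper then turns this into $\int_{B_{2r}}\phi(|\nabla u^\epsilon-\nabla u|)\,dx\to0$ with a monotonicity inequality for $\psi_\epsilon$ that has a free parameter $\kappa$; its constant is uniform in $\epsilon$ by \eqref{eq:g'g''}. This yields convergence of the whole family at once, with no subsequences and no identification of the limit. The price is that it needs $u$ to be a weak solution of \eqref{mainPDE} and a structural inequality for $\psi_\epsilon$.

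Your argument is purely variational. It uses the direct method, $U^\epsilon$ as a recovery sequence, the two-sided comparison between $\psi_\epsilon$ and $\phi$, uniqueness from strict convexity, and the subsequence principle. It relies only on convexity, $\Delta_2$, and the minimality of $u$ itself. In exchange it is non-constructive and needs a separate upgrade from weak to strong convergence.

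\textbf{A correction on the upgrade step.} Your Clarkson-type inequality fails for $1<p<2$. For $\phi(t)=t^p$, $a=1$, $b=1+\delta$, the left side is of order $\delta^p$ and the right side of order $\delta^2$. The proposition sits in Section~3, where only $1<p\le q$ is assumed; the condition $p\ge2$ is imposed only in Section~5. So your ``fallback'' is the argument that actually proves the statement as posed, and it works for every $p>1$:
\begin{enumerate}
\item Along a subsequence, $h_\epsilon\to0$ a.e.
\item This forces $u^\epsilon_{x_i}\to u_{x_i}$ a.e. The reasons are strict convexity of $t\mapsto\phi(|t|)$, and the fact that for fixed $b$ the convexity defect tends to $\infty$ as $|a|\to\infty$, since $\phi(2t)\ge 2^p\phi(t)$ with $p>1$.
\item Componentwise energy convergence follows from convergence of the total energy together with componentwise weak lower semicontinuity.
\item Scheff\'e's lemma and Vitali's theorem, applied to $\phi(|u^\epsilon_{x_i}-u_{x_i}|)\lesssim\phi(|u^\epsilon_{x_i}|)+\phi(|u_{x_i}|)$, finish the proof.
\end{enumerate}
Make this the main line, and keep the Clarkson route only as a remark for $p\ge2$.
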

\begin{proof}  
Set $\int_{B_{2r}}\phi(|\nabla u |)+1\,dx=:M$. By taking $u^\epsilon-U^\epsilon$ as the test function in the weak formulations of \eqref{mainPDE} and \eqref{PDE_regularized_BC}, we have 
\[
\sum_{i=1}^n \int_{B_{2r}} \left(\psi_{\epsilon}' (u^\epsilon_{x_i})- \frac{\phi'(|u_{x_i}|)}{|u_{x_i}|}u_{x_i} \right)(u^{\epsilon}_{x_i}-U^\epsilon_{x_i}) \, dx =0,
\]
and hence
\[\begin{split}
& \sum_{i=1}^n \int_{B_{2r}} \left(\psi_{\epsilon}' (u^\epsilon_{x_i}) -\psi_{\epsilon}' (u_{x_i})\right)(u^{\epsilon}_{x_i}-u_{x_i}) \, dx\\
& = \sum_{i=1}^n \int_{B_{2r}}\left(\psi_{\epsilon}' (u^\epsilon_{x_i})-\frac{\phi'(|u_{x_i}|)}{|u_{x_i}|}u_{x_i} \right)(U^\epsilon_{x_i}-u_{x_i}) \, dx \\
&\qquad 
+\sum_{i=1}^n \int_{B_{2r}}\left(\frac{\phi'(|u_{x_i}|)}{|u_{x_i}|}u_{x_i}  -\psi_{\epsilon}' (u_{x_i})\right)(u^\epsilon_{x_i}-u_{x_i}) \, dx \\
&=: I_1+I_2.
\end{split}\]

We first estimate $I_1$ as $\epsilon \to 0^+$. 
By H\"older's inequality,
\[
I_1\le c \sum_{i=1}^n \left\| \psi_{\epsilon}' (u^\epsilon_{x_i})-\frac{\phi'(|u_{x_i}|)}{|u_{x_i}|}u_{x_i}\right\|_{L^{\phi^*}(B_{2r})} \|U^\epsilon_{x_i}-u_{x_i}\|_{L^\phi(B_{2r})}.
\]
Note that by \eqref{eq:g'}, Lemma~\ref{lem:tphi}(ii), and \eqref{equivalent},
\[\begin{split}
\phi^*\left(\left|\psi_{\epsilon}' (u^\epsilon_{x_i})-\frac{\phi'(|u_{x_i}|)}{|u_{x_i}|}u_{x_i}\right|\right)  &\le  c \phi^*\left(\tilde\phi'_{\epsilon} \big(\epsilon +|u^{\epsilon}_{x_i}|\big) +\phi' \big(|u_{x_i}|\big)\right)\\
&\le c \phi(|u^{\epsilon}_{x_i}|) + c \phi(|u_{x_i}|) + c\phi(\epsilon),
\end{split}\]
and hence, by \eqref{eq:energy_estimate1}, 
\[
\int_{B_{2r}} \phi^*\left(\left|\psi_{\epsilon}' (u^\epsilon_{x_i})-\frac{\phi'(|u_{x_i}|)}{|u_{x_i}|}u_{x_i}\right|\right)\,dx \le c \int_{B_{2r}}\phi(|\nabla u |)+1\,dx \le cM,
\]
which implies 
\[
\left\| \psi_{\epsilon}' (u^\epsilon_{x_i})-\frac{\phi'(|u_{x_i}|)}{|u_{x_i}|}u_{x_i}\right\|_{L^{\phi^*}(B_{2r})}\le c M
\]
for all sufficiently small $\epsilon \in (0,\epsilon_0)$  and $  i=1,2,\dots,n$. Therefore, since $U^\epsilon \to u $ as $\epsilon \to 0^+$ in $L^\phi(B_{2r})$, we have that $I_1\to 0$ as $\epsilon \to 0^+$.

We next consider $I_2$. Observe that for every $\epsilon\in (0,1)$
\[\begin{split}
&\phi^*\left(\left|\frac{\phi'(|u_{x_i}|)}{|u_{x_i}|}u_{x_i} -\psi_{\epsilon}' (u_{x_i})\right|\right) \\
&\le c \phi^*\left(\left|\frac{\phi'(|u_{x_i}|)}{|u_{x_i}|}u_{x_i}-\frac{\tilde\phi'_{\epsilon}(|u_{x_i}|)}{|u_{x_i}|}u_{x_i}\right|\right) +c \phi^*\left(\left|\frac{\tilde\phi'_{\epsilon}(|u_{x_i}|)}{|u_{x_i}|}u_{x_i} -\frac{\tilde\phi'_{\epsilon}\big((\epsilon^2+|u_{x_i}|^2)^{\frac{1}{2}}\big)}{(\epsilon^2+|u_{x_i}|^2)^{\frac{1}{2}}}u_{x_i}\right|\right).
\end{split}\]
Then, by the continuity of the functions $\phi^*$ and  $\phi'$ and Lemma~\ref{lem:tphi}(ii), we have that both terms on the right-hand side converge pointwise to zero, which yields 
\[
\lim_{\epsilon\to 0^+}\phi^*\left(\left|\frac{\phi'(|u_{x_i}(x)|)}{|u_{x_i}(x)|}u_{x_i}(x) -\psi_{\epsilon}' (u_{x_i}(x))\right|\right)=0 
\quad\text{for a.e. } x\in B_{2r}.
\]
Moreover, by \eqref{equivalent},
\[
\phi^*\left(\left|\frac{\phi'(|u_{x_i}|)}{|u_{x_i}|}u_{x_i} -\psi_{\epsilon}' (u_{x_i})\right|\right) \le c \phi^*\big(\phi'(|u_{x_i}|)\big) + c \phi^*\big(\phi'(\epsilon+|u_{x_i}|)\big)\le c \phi(1+|u_{x_i}|),
\]
where the last function is integrable over $B_{2r}$.
Hence, by Lebesgue's dominated convergence theorem, 
\[
\lim_{\epsilon\to 0^+}\int_{B_{2r}}\phi^*\left(\left|\frac{\phi'(|u_{x_i}|)}{|u_{x_i}|}u_{x_i} -\psi_{\epsilon}' (u_{x_i})\right|\right) \,dx =0 
\quad \text{for all }\ i=1,2,\dots,n.
\]
This, together with Young's inequality and the fact that $\int_{B_{2r}}\phi(|u^\epsilon_{x_i} -u_{x_i}|)\,dx \le c M$ by \eqref{eq:energy_estimate1}, implies 
\[
I_2 \le c \sum_{i=1}^n\left\|\frac{\phi'(|u_{x_i}|)}{|u_{x_i}|}u_{x_i} -\psi_{\epsilon}' (u_{x_i})\right\|_{L^{\phi^*}(B_{2r})}\|u^\epsilon_{x_i} -u_{x_i}\|_{L^\phi(B_{2r})} \to 0 
\quad \text{as }\ \epsilon\to 0^+ .
\]

Therefore, we obtain 
\begin{equation}\label{eq:covergence_g_epsilon}
\lim_{\epsilon\to 0^+}\sum_{i=1}^n \int_{B_{2r}} \left(\psi_{\epsilon}' (u^\epsilon_{x_i}) -\psi_{\epsilon}' (u_{x_i})\right)(u^{\epsilon}_{x_i}-u_{x_i}) \, dx =0.
\end{equation}
We recall the following inequality (see e.g. \cite[Proposition 3.8(3)]{HasO22}):  
for any $\kappa\in(0,1)$,
\[
\psi_{\epsilon}(|u^{\epsilon}_{x_i}-u_{x_i}|) \le   \kappa \left(\psi_{\epsilon}(|u^{\epsilon}_{x_i}|) +\psi_{\epsilon}(|u_{x_i}|) \right) + \frac{c}{\kappa} \left(\frac{\psi_{\epsilon}'(|u^{\epsilon}_{x_i}|)}{|u^{\epsilon}_{x_i}|}u^{\epsilon}_{x_i}-\frac{\psi_{\epsilon}'(|u_{x_i}|)}{|u_{x_i}|}u_{x_i}\right)(u^{\epsilon}_{x_i}-u_{x_i}),
\] 
and observe from \eqref{def:g} and Lemma~\ref{lem:tphi}(ii) that 
\begin{equation}\label{eq:gepsilonphi}
 \phi(|t|)-(1+\epsilon)^{q-1}\phi(\epsilon)  \le  \psi_\epsilon(t)  \le (1+\epsilon)^{q-1} \phi (\epsilon+|t| ) 
 \quad \text{for all }\ t\in \R,
\end{equation}
which  implies that
\[
0\le \phi(|t|) \le   \psi_\epsilon(t) + (1+\epsilon)^{q-1}\phi(\epsilon)
\quad\text{and}\quad
0\le \psi_{\epsilon}(t) \le 2^{q-1}\phi(1+|t|)
\quad \text{for all }\ t\in \R.
\]
Using the previous results, the fact that $\frac{\psi_{\epsilon}'(|t|)}{|t|}t=\psi_{\epsilon}'(t)$, \eqref{eq:energy_estimate1} and \eqref{eq:covergence_g_epsilon}, we have that 
\[\begin{split}
\limsup_{\epsilon\to 0^+} \int_{B_{2r}}\phi(|\nabla u^{\epsilon} - \nabla u|)\,dx  
&\le c \limsup_{\epsilon\to 0^+} \sum_{i=1}^n \int_{B_{2r}}\phi(|u^{\epsilon}_{x_i}-u_{x_i}|)\,dx\\
 &\le \limsup_{\epsilon\to 0^+} \sum_{i=1}^n \int_{B_{2r}}\psi_{\epsilon}(|u^{\epsilon}_{x_i}-u_{x_i}|)\,dx \\
& \le c \kappa  \limsup_{\epsilon\to 0^+}  \sum_{i=1}^n \int_{B_{2r}}\left(\psi_{\epsilon}(|u^{\epsilon}_{x_i}|) +\psi_{\epsilon}(|u_{x_i}|) \right)\,dx\\
&\quad + \frac{c}{\kappa} \limsup_{\epsilon\to 0^+}   \sum_{i=1}^n \int_{B_{2r}} \left(\psi_{\epsilon}' (u^\epsilon_{x_i}) -\psi_{\epsilon}' (u_{x_i})\right)(u^{\epsilon}_{x_i}-u_{x_i}) \, dx\\
& \le c\kappa.
\end{split}\]
Consequently, since $\kappa\in(0,1)$ is arbitrary, we have  proven that 
\[
\lim_{\epsilon\to 0^+}\int_{B_{2r}}\phi(|\nabla u^{\epsilon}-\nabla u|)\,dx=0. \qedhere
\]
\end{proof}
Based on the previous result, we can provide estimates  directly for the regularized nondegenerate problems \eqref{PDE_regularized_BC}.
The first result is the  reverse H\"older type integrability which is a direct consequence of Lemma~\ref{lem:high} applied with $\psi_\epsilon$ in place of $\phi$, together with \eqref{eq:gepsilonphi}.

\begin{lemma}[reverse H\"older type integrability]\label{lem:high1}
Under the above setting, there exists $\sigma_0\in (0,1)$ and $c>0$  depending only on $n$, $p$, and $q$ such that 
\[
\begin{split}
\fint_{B_{r}} \psi_{\epsilon} (|\nabla u^\epsilon|)^{1+\sigma_0}\,dx
 \le  c \left( \fint_{B_{2r}}  \psi_\epsilon ( |\nabla u^\epsilon|) \,dx\right)^{1+\sigma_0} 
&\le  c \left( \fint_{B_{2r}}  \psi_\epsilon ( |\nabla U^\epsilon|) \,dx\right)^{1+\sigma_0}\\
& \le  c \left( \fint_{B_{2r}} \phi (| \nabla u|)\,dx+1\right)^{1+\sigma_0} .
\end{split}\]
\end{lemma}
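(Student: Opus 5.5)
Lemma~\ref{lem:high1} is a chain of three inequalities, and I will prove them in order, reading $\psi_\epsilon(|\nabla u^\epsilon|)$ as the radial N-function $t\mapsto\psi_\epsilon(t)$ evaluated at $|\nabla u^\epsilon|$.

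\textbf{First inequality.} I will apply Lemma~\ref{lem:high} with $\phi$ replaced by $\psi_\epsilon$ restricted to $[0,\infty)$. For this I must check that $\psi_\epsilon$ satisfies Assumption~\ref{assump:growth} with constants depending only on $p,q$ (uniformly in $\epsilon$), and that $u^\epsilon$ is a local weak solution of the corresponding orthotropic equation. The equation is \eqref{PDE_regularized_BC}, whose operator is $A^\epsilon_i(\xi)=\psi'_\epsilon(\xi_i)=\frac{\psi_\epsilon'(|\xi_i|)}{|\xi_i|}\xi_i$, of exactly the form required.

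For the growth condition I will use \eqref{eq:g'g''}. It gives that $\psi'_\epsilon(t)/t^{\min\{p-1,1\}}$ is increasing and $\psi'_\epsilon(t)/t^{\max\{q-1,1\}}$ is decreasing on $(0,\infty)$. Integrating from $0$ then yields \eqref{cond:growth-1} for $\psi_\epsilon$ with $L=1$ and exponents $\min\{p,2\}$ and $\max\{q,2\}$. These lie in $(1,\infty)$ and depend only on $p,q$, which is the $\epsilon$-independence needed. The normalization $\psi_\epsilon(1)=1$ fails. However, inspecting how constants arise in \cite{CiaFus99} (via \eqref{equivalent} and Gehring's lemma applied to a Caccioppoli inequality), they depend only on the indices, and the estimate is invariant under multiplying $\psi_\epsilon$ by a constant. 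I can therefore rescale, or simply remark that the constants do not depend on $\psi_\epsilon(1)$. This is the main point needing care, and I would handle it by explicitly rescaling to $\psi_\epsilon/\psi_\epsilon(1)$.

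\textbf{Second inequality.} This is the minimality \eqref{eq:energy_estimate0}, but in the componentwise form $\sum_i\psi_\epsilon(u^\epsilon_{x_i})$. To pass between the two forms I will use that $\psi_\epsilon$ is increasing in $|t|$, satisfies $\Delta_2$ with constant $2^{\max\{q,2\}}$, and is convex. These give
\[
\sum_i\psi_\epsilon(\xi_i)\le n\,\psi_\epsilon(|\xi|)
\quad\text{and}\quad
\psi_\epsilon(|\xi|)\le \psi_\epsilon(n\max_i|\xi_i|)\le c\sum_i\psi_\epsilon(\xi_i).
\]
Hence $\int\psi_\epsilon(|\nabla u^\epsilon|)\le c\int\psi_\epsilon(|\nabla U^\epsilon|)$, and raising to the power $1+\sigma_0$ absorbs the constant.

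\textbf{Third inequality.} I will use the upper bound in \eqref{eq:gepsilonphi}:
\[
\psi_\epsilon(|\nabla U^\epsilon|)\le 2^{q-1}\phi(\epsilon+|\nabla U^\epsilon|)\le c\bigl(\phi(|\nabla U^\epsilon|)+\phi(1)\bigr).
\]
Next, $\nabla U^\epsilon=\nabla u*m_\epsilon$. By Jensen's inequality with the convex $\phi$, $\phi(|\nabla U^\epsilon|)\le \phi(|\nabla u|)*m_\epsilon$. Its average over $B_{2r}$ is controlled by the average of $\phi(|\nabla u|)$ over the $\epsilon$-neighbourhood $B_{2r+\epsilon}$, which is fine since $\epsilon<\mathrm{dist}(B_{2r},\partial\Omega)$. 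To get exactly $\fint_{B_{2r}}$, I would instead invoke the strong convergence $U^\epsilon\to u$ in $W^{1,\phi}(B_{2r})$, already used in \eqref{eq:energy_estimate1}. For $\epsilon$ small this gives $\fint_{B_{2r}}\phi(|\nabla U^\epsilon|)\le 2\fint_{B_{2r}}\phi(|\nabla u|)+1$, and since the statement concerns sufficiently small $\epsilon$, this completes the chain.
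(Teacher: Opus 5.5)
Your proposal is correct and follows the paper's route. You apply Lemma~\ref{lem:high} to $\psi_\epsilon$, whose indices $\min\{p,2\}$ and $\max\{q,2\}$ are $\epsilon$-independent by \eqref{eq:g'g''}; then you use the minimality \eqref{eq:energy_estimate0}, and finally \eqref{eq:gepsilonphi} together with the convergence $U^\epsilon\to u$ for small $\epsilon$, as in \eqref{eq:energy_estimate1}. You are more explicit than the paper about the normalization $\psi_\epsilon(1)\neq1$ and the comparison between the radial and componentwise forms. One routine detail is left unstated by both you and the paper: $u^\epsilon$ solves the equation only in $B_{2r}$, so the first inequality needs a covering of $B_r$ by balls $B_{r/4}(x_i)$ with $B_{r/2}(x_i)\Subset B_{2r}$.
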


The second result concerns the smoothness of the weak solution $u^\epsilon$ to \eqref{PDE_regularized_BC}.

\begin{proposition}\label{prop:u_epsilon}
Under the above setting, $u^\epsilon$ is $C^{\infty}$ in $B_{2r}$.
\end{proposition}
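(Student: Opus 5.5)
The plan is to combine the uniform ellipticity of the regularized integrand with classical regularity theory for nondegenerate quasilinear equations with smooth coefficients, and then bootstrap.

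\textbf{Structure of the regularized integrand.} By \eqref{eq:g''}, on any bounded set $|t|\le M$ the second derivative $\psi_\epsilon''$ is bounded above and below by positive constants depending on $\epsilon$ and $M$. Indeed, the quantity $\tilde\phi'_\epsilon(s)/s$ with $s=(\epsilon^2+t^2)^{1/2}\in[\epsilon,(\epsilon^2+M^2)^{1/2}]$ is continuous and positive. Moreover $\psi_\epsilon\in C^\infty(\R)$, since $\tilde\phi_\epsilon\in C^\infty((0,\infty))$ and $(\epsilon^2+t^2)^{1/2}\ge\epsilon>0$. Hence the Euler--Lagrange operator $\sum_i(\psi_\epsilon'(v_{x_i}))_{x_i}$ has a smooth, diagonal coefficient matrix $\mathrm{diag}(\psi''_\epsilon(v_{x_i}))$, which is uniformly elliptic wherever $\nabla v$ is bounded. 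The whole difficulty is therefore to prove first that $\nabla u^\epsilon\in L^\infty_\loc$ (and then Hölder); after that, Schauder bootstrapping is routine.

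\textbf{Differentiation step.} First I would show $u^\epsilon\in W^{2,2}_\loc$ together with $\psi_\epsilon'(u^\epsilon_{x_i})\in W^{1,2}_\loc$, via the difference quotient method. Test \eqref{PDE_regularized_BC} with $\Delta_{-h,j}(\eta^2\Delta_{h,j}u^\epsilon)$. The lower bound $\psi''_\epsilon(t)\ge \min\{p-1,1\}\tilde\phi'_\epsilon(\epsilon+|t|)/(\epsilon+|t|)\gtrsim \epsilon^{p-2}$, which holds because $p\ge2$ and $\tilde\phi'_\epsilon(s)/s^{p-1}$ is increasing, gives a uniform $L^2$ lower ellipticity. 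The upper bound $\psi''_\epsilon(t)\lesssim (\epsilon+|t|)^{q-2}$ is controlled by the higher integrability of Lemma~\ref{lem:high1}, or alternatively by an Orlicz-type difference quotient estimate as in the isotropic $\phi$-Laplacian theory. This yields the differentiated equation
\[
\sum_i\int \psi_\epsilon''(u^\epsilon_{x_i})\,u^\epsilon_{x_ix_j}\,\varphi_{x_i}\,dx=0 .
\]

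\textbf{Boundedness of the gradient (main obstacle).} Next I would prove $\nabla u^\epsilon\in L^\infty_\loc$. For this I would run a Moser iteration on the differentiated equation, testing with $\eta^2|u^\epsilon_{x_j}|^{2\beta}u^\epsilon_{x_j}$. This step is the main obstacle: the coefficients are only known to satisfy
\[
\epsilon^{p-2}\lesssim\psi''_\epsilon\lesssim(\epsilon+|\nabla u^\epsilon|)^{q-2},
\]
so they are nonuniformly elliptic with $q$ possibly far from $p$. I would instead exploit the separable structure. Since $\psi_\epsilon$ acts on a single variable, one can alternatively appeal to the classical results for functionals $\sum_i f_i(u_{x_i})$ with $f_i''>0$ smooth, namely Ural'tseva--Urdaletova-type gradient bounds or the Bernstein technique of \cite{UU}. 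Another option is a Lipschitz bound for regular orthotropic problems as in \cite{BouBraLeoVer18}, whose argument only uses $\psi''_\epsilon>0$ smooth and controlled growth. If that fails, I would approximate further by adding $\delta|t|^2$-type or quadratic-growth truncations to $\psi_\epsilon$ (so that the coefficients are bounded), obtain smoothness there by standard theory, and pass to the limit.

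\textbf{Bootstrap.} Once $\nabla u^\epsilon$ is locally bounded, the coefficients $\psi_\epsilon''(u^\epsilon_{x_i})$ are bounded and uniformly elliptic. De Giorgi--Nash--Moser applied to $w=u^\epsilon_{x_j}$ gives $u^\epsilon\in C^{1,\alpha}_\loc$. Then the coefficients are $C^{0,\alpha}$, and Schauder theory for the linear equation satisfied by $u^\epsilon_{x_j}$ gives $C^{2,\alpha}$. Iterating, with $\psi_\epsilon\in C^\infty$, yields $u^\epsilon\in C^\infty(B_{2r})$.
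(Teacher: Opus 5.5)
There is a genuine gap, and it is exactly the step you flag as the ``main obstacle'': the local boundedness of $\nabla u^\epsilon$ is never proved, and none of the routes you list closes it as stated.

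\begin{itemize}
\item \textbf{Direct Moser iteration.} The coefficients are only known to satisfy $\epsilon^{p-2}\lesssim\psi''_\epsilon\lesssim(\epsilon+|t|)^{q-2}$, with no restriction on $q/p$, and you concede this does not close. The orthotropic machinery that does close it (Lemma~\ref{lem:wCaccioppoli} and Proposition~\ref{prop:unifLip}) uses test functions such as $u_{x_j}|u_{x_j}|^{2s-2}|u_{x_k}|^{2m}\eta^2$. These are not known to be admissible for a function that is merely in $W^{2,2}_\loc$ with $\psi_\epsilon(|\nabla u^\epsilon|)\in L^{1+\sigma_0}_\loc$.
\item \textbf{Appealing to \cite{BouBraLeoVer18} or to the scheme of Proposition~\ref{prop:unifLip}.} This is circular, because these are a priori estimates for regularized solutions already known to be smooth. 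The hole-filling via Lemma~\ref{lem:tec} needs $Z(t)=\int_{B_t}\mathcal U^{\gamma_j}\psi_\epsilon(\mathcal U)\,dx$ to be finite. Step~4 absorbs $\frac12\|\nabla u^\epsilon\|_{L^\infty(B_{s_2})}$, which must be finite beforehand. Proposition~\ref{prop:u_epsilon} is precisely what supplies this.
\item \textbf{The Bernstein technique of \cite{UU}.} It is again an a priori estimate on sufficiently smooth solutions, developed for the power functional $\sum_i\int|u_{x_i}|^p$. You do not carry out its transfer to a general $\psi_\epsilon$ with $(p,q)$-growth.
\item \textbf{Truncating $\psi_\epsilon$ to quadratic growth.} This gives smooth approximants by standard theory. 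The whole difficulty then becomes a Lipschitz bound uniform in the truncation level, which you do not provide.
\end{itemize}

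So your argument reduces the proposition to an unproven $L^\infty$ gradient bound. The difference-quotient step and the De Giorgi--Schauder bootstrap are fine. Note, however, that your uniform lower bound $\psi''_\epsilon\gtrsim\epsilon^{p-2}$ uses $p\ge2$, which is not part of the setting of this proposition (only $1<p\le q$ is assumed there).

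The paper avoids any gradient bound at this stage. It applies \cite[Theorem 9.4]{Sta63} to the smooth autonomous integrand $\Psi(\xi)=\sum_k\psi_\epsilon(\xi_k)$, so the only thing to check is the ellipticity Condition~III. The check goes as follows:
\begin{itemize}
\item $D^2\Psi(\xi)=\mathrm{diag}(\psi''_\epsilon(\xi_1),\dots,\psi''_\epsilon(\xi_n))$.
\item By \eqref{eq:g''}, $\psi''_\epsilon(\xi_i)\ge\min\{p-1,1\}\,\tilde\phi'_\epsilon(s_i)/s_i$ with $s_i=(\epsilon^2+\xi_i^2)^{1/2}\ge\epsilon$.
\item Treating $s_i\ge1$ and $s_i<1$ separately yields $D^2\Psi(\xi)z\cdot z\ge\nu_\epsilon(1+|\xi|^2)^{(\min\{p,2\}-2)/2}|z|^2$ for some $\nu_\epsilon>0$.
\end{itemize}
The idea missing from your plan is that the shift by $\epsilon$ makes the regularized problem fit a classical regularity theorem for regular variational integrals. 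Proving the Lipschitz bound, as you attempt, is not needed here; it is the hard content of the rest of the paper.
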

\begin{proof} 
Define $\Psi(\xi):=\sum_{k=1}^n \psi_\epsilon(\xi_k)$. We will show that the functional $I=\mathfrak F_{\psi_\epsilon}$ defined in \eqref{eq:regularizedenergy} satisfies the assumptions of \cite[Theorem 9.4]{Sta63} with $f=\Psi$, so that, since $\Psi(\xi)$ is smooth for $\xi=(\xi_1,\cdots, \xi_n)\in \R^n$, the theorem implies the smoothness of  $u^\epsilon$ in $B_{2r}$. Since the functional $\mathfrak F_{\psi_\epsilon}$ is autonomous, it suffices to verify Condition III in \cite[Eq. (9.1)]{Sta63}. For $\xi=(\xi_1,\cdots, \xi_n), z=(z_1,\cdots,z_n) \in \R^n$, by \eqref{eq:g''}, and Lemma~\ref{lem:tphi}, we have 
\[\begin{split}
\left( \Psi(\xi)\right)_{\xi_i \xi_j} z_i z_j&=
\left(\sum_{k=1}^n \psi_{\epsilon}(\xi_k)\right)_{\xi_i \xi_j} z_i z_j
 = \big(\psi_{\epsilon}'(\xi_i)\big)_{\xi_j} z_i z_j   = \psi_{\epsilon}''(\xi_i) \delta_{ij}  z_i z_j\\
& \ge \min\{p-1,1\}\frac{\tilde\phi'_{\epsilon}((\epsilon^2 + \xi_i^2)^{\frac12})}{(\epsilon^2 + \xi_i^2)^{\frac12}}  \delta_{ij}  z_i z_j\\
& \ge 
\begin{cases}
\min\{p-1,1\}\tilde\phi'_{\epsilon}(1) (\epsilon^2 + \xi_i^2)^{\frac{\min\{p, 2\}-2}2}  \delta_{ij}  z_i z_j,\quad \text{if }\ (\epsilon^2 + \xi_i^2)^{\frac12} \ge 1,\\
\min\{p-1,1\} \tilde\phi'_{\epsilon}(\epsilon)  \delta_{ij}  z_i z_j,\quad \text{if }\ (\epsilon^2 + \xi_i^2)^{\frac12} < 1.
\end{cases}
\end{split}\]
Therefore, it turns out that
\[\begin{split}
\sum_{j=1}^n \left(\Psi(\xi)\right)_{\xi_i \xi_j} z_i z_j
& \ge \begin{cases}
\min\{p-1,1\}\tilde\phi'_{\epsilon}(1) (1 + |\xi|^2)^{\frac{\min\{p, 2\}-2}2}  z_i^2 ,\quad \text{if }\ (\epsilon^2 + \xi_i^2)^{\frac12} \ge 1,\\
\min\{p-1,1\}  \tilde\phi'_{\epsilon}(\epsilon)   z_i^2,\quad \text{if }\ (\epsilon^2 + \xi_i^2)^{\frac12} < 1
\end{cases}\\
&\ge  \nu_\epsilon   (1 + |\xi|^2)^{\frac{\min\{p, 2\}-2}2}  z_i^2
\end{split}
\]
for some $\nu_\epsilon>0$, and so
\[\begin{split}
\sum_{i,j=1}^n \left(\Psi(\xi)\right)_{\xi_i \xi_j} z_i z_j
& \ge  \nu_\epsilon   (1 + |\xi|^2)^{\frac{\min\{p, 2\}-2}2}  |z|^2. \qedhere
\end{split}
\]

\end{proof}

\section{Caccioppoli-type inequalities}
Assume that $B_{2r}\Subset\Omega$. For simplicity, we consider the case $r=1$, which means $B_2\Subset \Omega$.
%\begin{equation}\label{assump:normal}
%B_2\Subset\Omega
%\quad \text{and}\quad
%\fint_{B_2} \phi(|\nabla u|) \,dx \le 1.
%\end{equation}
Fix $\epsilon\in (0,\epsilon_1)$ with $\epsilon_1:= \min\{1,\mathrm{dist}(B_2,\partial \Omega)\}$, and let $u^\epsilon \in W^{1,\phi}(B_2)$ be the weak solution to \eqref{PDE_regularized_BC} with $r=1$. Note that 
 by 
 %Lemma~\ref{lem:high1} and 
 Proposition~\ref{prop:u_epsilon}, $u^\epsilon$ is smooth.
%  and satisfies 
%$$
%\int_{B_1} \psi_\epsilon(|\nabla u^\epsilon|)^{1+\sigma_0} \,dx \le c.
%$$
Moreover, $u^{\epsilon}$  satisfies the equation
$$
\sum_{i=1}^n (\psi_{\epsilon}'(u^{\epsilon}_{x_i}) )_{x_i} = 0 \quad \text{pointwise in }\ B_2.
$$
In particular, for any $\zeta \in W^{1,1}_{0}(B_{2})$, by multiplying the above equation by $\zeta_{x_j}$ and integrating over $B_2$ and applying integration by parts, we obtain  
\begin{equation}\label{weakform2}
\sum_{i=1}^n \int_{B_{2}} \psi_{\epsilon}''(u^{\epsilon}_{x_i}) u^{\epsilon}_{x_ix_j} \,\zeta_{x_i} \, dx =0
\end{equation}
for all $j=1,2,\dots,n$. 

In this section, we establish Caccioppoli-type estimates that extend the results obtained for the $p$-power growth case in~\cite{BouBraLeoVer18} to the general Orlicz setting. 
For convenience, we will  write $u$ instead of $u^\epsilon$ for the remainder of this section.
%%%%%%%%%%%%%%%%%%%%%%%%%%%%%%
\begin{lemma}
Let $G : \R \rightarrow \R^+$ be a $C^1$-convex function. There exists a constant $c >0$ depending on $n, p,q$ such that for every $\eta \in C_0^{\infty}(B_{2})$ and every $j =1,2,\dots, n$, we have
\begin{equation}\label{ineq:Caccioppoli}
\sum_{i=1}^n \int_{B_{2}} \psi_{\epsilon}'' (u_{x_i}) |(G(u_{x_j}))_{x_i}|^2 \eta^2 \,dx 
\le c  \sum_{i=1}^n \int_{B_{2}} \psi_{\epsilon}'' (u_{x_i}) G(u_{x_j})^2 \eta_{x_i}^2 \,dx.
\end{equation}
\end{lemma}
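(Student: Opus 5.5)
We test the differentiated equation \eqref{weakform2} with $\zeta = G(u_{x_j})G'(u_{x_j})\eta^2$. This is admissible in $W^{1,1}_0(B_2)$ because $u=u^\epsilon$ is smooth by Proposition~\ref{prop:u_epsilon}. If $G$ is only $C^1$, we first carry out the argument for a smooth convex approximation of $G$, with $G_k\to G$ and $G_k'\to G'$ locally uniformly, and pass to the limit at the end; the constant does not depend on $k$. We then compute
\[
\zeta_{x_i} = \big(G'(u_{x_j})^2 + G(u_{x_j})G''(u_{x_j})\big)u_{x_jx_i}\eta^2 + 2G(u_{x_j})G'(u_{x_j})u_{x_jx_i}\eta\eta_{x_i}.
\]
Inserting this into \eqref{weakform2} and using $(G(u_{x_j}))_{x_i}=G'(u_{x_j})u_{x_jx_i}$ gives
\[
\sum_i\int \psi_\epsilon''(u_{x_i})|(G(u_{x_j}))_{x_i}|^2\eta^2 + \sum_i\int \psi_\epsilon''(u_{x_i})\,G\,G''\,u_{x_ix_j}^2\eta^2 = -2\sum_i\int\psi_\epsilon''(u_{x_i})\,G(u_{x_j})\,(G(u_{x_j}))_{x_i}\,\eta\,\eta_{x_i}.
\]

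The second term on the left is nonnegative, since $\psi_\epsilon''>0$ by \eqref{eq:g''}, $G\ge0$, and $G''\ge0$ by convexity. We therefore drop it. This sign is the one point that needs care: the identity only has the right structure because the weight $\psi_\epsilon''(u_{x_i})$ multiplies $u_{x_ix_j}$ in the $i$-th slot, and the test function is built from $u_{x_j}$ alone. Consequently no cross-direction terms appear, and the orthotropic degeneracy causes no trouble here.

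For the right-hand side we apply Young's inequality termwise with the same weight $\psi_\epsilon''(u_{x_i})$:
\[
2\psi_\epsilon''\,|G|\,|(G)_{x_i}|\,|\eta|\,|\eta_{x_i}| \le \tfrac12\psi_\epsilon''|(G)_{x_i}|^2\eta^2 + 2\psi_\epsilon''G^2\eta_{x_i}^2.
\]
We absorb the first part into the left-hand side. All integrals are finite because $u$ is smooth on $\operatorname{supp}\eta\Subset B_2$. This yields \eqref{ineq:Caccioppoli} with $c=4$, which is in particular independent of $\epsilon$ and trivially depends only on $n,p,q$.

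The only mild obstacle is the regularity of $G$ needed to differentiate $GG'$. This is handled by the smooth convex approximation described above, or equivalently by noting that $GG'$ is Lipschitz wherever $G'$ is, together with the chain rule in $W^{1,1}$. The limit passage uses dominated convergence on the compact set $\operatorname{supp}\eta$, where $\nabla u$ and $D^2u$ are bounded.
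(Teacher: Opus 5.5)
Your argument is correct and is the paper's proof: test \eqref{weakform2} with $\zeta=\eta^2G(u_{x_j})G'(u_{x_j})$, drop the nonnegative term $\psi_\epsilon''(u_{x_i})G(u_{x_j})G''(u_{x_j})u_{x_ix_j}^2\eta^2$ by convexity, and finish with Young's inequality and absorption, smoothing $G$ if it is only $C^1$. Two small slips: in your formula for $\zeta_{x_i}$ the term $2G(u_{x_j})G'(u_{x_j})\eta\eta_{x_i}$ should not carry the factor $u_{x_jx_i}$ (the identity you write next is nonetheless correct), and your aside that $GG'$ is Lipschitz does not hold for a general convex $C^1$ function $G$, so rely on the mollification route.
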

\begin{proof}
Suppose that $G \in C^2$ for simplicity. 
Otherwise, a standard smoothing argument can be applied.
 Taking  $\zeta:=\eta^2 G(u_{x_j}) G'(u_{x_j})$ as a test function  in \eqref{weakform2}, we have 
 \[
 \begin{split}
 0
 &=\sum_{i=1}^n \int_{B_{2}} \psi_{\epsilon}'' (u_{x_i})   [2 \eta \eta_{x_i} G(u_{x_j}) G'(u_{x_j})u_{x_i x_j}  + \eta^2 G'(u_{x_j})^2 u_{x_i x_j}^2  + \eta^2 G(u_{x_j})G''(u_{x_j}) u^2_{x_i x_j}] \, dx\\
&\ge \sum_{i=1}^n \int_{B_{2}} \psi_{\epsilon}'' (u_{x_i})   [2 \eta \eta_{x_i} G(u_{x_j}) G'(u_{x_j})u_{x_i x_j}  + \eta^2 G'(u_{x_j})^2 u_{x_i x_j}^2  ] \, dx,
\end{split} 
\] 
where, in the last inequality, we have used the convexity of  $\psi_\epsilon$ and $G$, i.e., $\psi_{\epsilon}'', G''\ge 0$. Therefore, applying Young's inequality and using the fact that $G'(u_{x_j})u_{x_i x_j}= [G(u_{x_j})]_{x_i}$, we obtain \eqref{ineq:Caccioppoli}.
\end{proof}

\begin{lemma}[Weird Caccioppoli inequality] \label{lem:wCaccioppoli}
Suppose that $F, G :[0,\infty) \rightarrow [0, \infty) $ are nondecreasing continuous functions, and $G$ is $C^1$-convex. 
Let $\eta \in C^{\infty}_0(B_{2})$ and $0\le \theta \le 2$. 
Then we have that for every $k,j =1,2,\dots, n$, 
\begin{equation}\label{ineq:wCaccioppoli}
\begin{split}
&\sum_{i=1}^n \int_{B_{2}} \psi_{\epsilon}'' (u_{x_i}) u^2_{x_i x_j}  F(u^2_{x_j}) G(u^2_{x_k}) \eta^2 \,dx \\
&\le   c  \sum_{i=1}^n \int_{B_{2}} \psi_{\epsilon}'' (u_{x_i}) u_{x_j}^2 F(u^2_{x_j})G(u^2_{x_k})\eta_{x_i}^2\, dx \\
 & \qquad + c \bigg(\sum_{i=1}^n \int_{B_{2}} \psi_{\epsilon}'' (u_{x_i}) u^2_{x_i x_j} u^2_{x_j} F(u^2_{x_j})^2 G'(u^2_{x_k})^{\theta} \eta^2 \,dx\bigg)^{\frac12}\\
&\qquad \qquad \times \bigg( \sum_{i=1}^n \int_{B_{2}} \psi_{\epsilon}'' (u_{x_i}) |u_{x_k}|^{2\theta}   G(u^2_{x_k})^{2-\theta} \eta_{x_i}^2 \,dx\bigg)^{\frac12}
\end{split}
\end{equation}
for some $c=c(n, p, q)>0.$
\end{lemma}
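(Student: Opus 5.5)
We test the differentiated equation \eqref{weakform2} (for the fixed index $j$) with
\[
\zeta := \eta^2\, u_{x_j} F(u^2_{x_j}) G(u^2_{x_k}).
\]
As in the previous lemma, we first assume $F\in C^1$ and $G\in C^2$ and remove this by a standard smoothing argument. Since $u=u^\epsilon$ is smooth by Proposition~\ref{prop:u_epsilon}, the test function is admissible. Differentiating $\zeta$ in $x_i$ gives four groups of terms.
\begin{enumerate}
\item The \emph{good term} $\eta^2 u_{x_ix_j}\bigl(F(u_{x_j}^2)+2u_{x_j}^2F'(u_{x_j}^2)\bigr)G(u_{x_k}^2)$.
\item The \emph{cross term} $2\eta^2 u_{x_j}F(u^2_{x_j})G'(u_{x_k}^2)\,u_{x_k}u_{x_kx_i}$.
\item The \emph{cutoff term} $2\eta\eta_{x_i}u_{x_j}F(u^2_{x_j})G(u^2_{x_k})$.
\end{enumerate}
Each term is multiplied by $\psi_\epsilon''(u_{x_i})u_{x_ix_j}$ and summed over $i$. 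Since $F'\ge0$ and $\psi_\epsilon''\ge0$ by \eqref{eq:g''}, the term with $F'$ is nonnegative and can be dropped. This leaves the left-hand side of \eqref{ineq:wCaccioppoli} bounded by the absolute values of the cross term and the cutoff term.

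The cutoff term is handled by Young's inequality:
\[
2|\eta\eta_{x_i}u_{x_ix_j}u_{x_j}|FG \le \tfrac12\eta^2u_{x_ix_j}^2FG+2u_{x_j}^2\eta_{x_i}^2FG.
\]
After weighting by $\psi''_\epsilon(u_{x_i})$, the first piece is absorbed into the left-hand side and the second gives the first term on the right of \eqref{ineq:wCaccioppoli}.

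For the cross term, split $G'(u^2_{x_k}) = G'^{\theta/2}\cdot G'^{1-\theta/2}$ and apply Cauchy--Schwarz in $x$ and $i$ with weight $\psi''_\epsilon(u_{x_i})$. This bounds it by
\[
\Bigl(\sum_i\int\psi''_\epsilon(u_{x_i})\,u_{x_ix_j}^2u_{x_j}^2F^2G'^{\theta}\eta^2\Bigr)^{1/2}\Bigl(\sum_i\int\psi''_\epsilon(u_{x_i})\,u_{x_k}^2G'(u_{x_k}^2)^{2-\theta}u_{x_kx_i}^2\eta^2\Bigr)^{1/2}.
\]
The first factor is exactly the one appearing in \eqref{ineq:wCaccioppoli}.

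The main obstacle is the second factor: it still contains second derivatives $u_{x_kx_i}$ and must be converted into a zero-order quantity with $\eta_{x_i}^2$. The plan is to use the previous Caccioppoli inequality \eqref{ineq:Caccioppoli} with index $k$ in place of $j$. Define $\tilde G(t)$ through
\[
\tilde G'(t)=|t|\,G'(t^2)^{1-\theta/2},
\]
so that
\[
\bigl|(\tilde G(u_{x_k}))_{x_i}\bigr|^2 = u_{x_k}^2 G'(u^2_{x_k})^{2-\theta}u_{x_kx_i}^2 .
\]
Then $\tilde G$ is convex, since $G'$ is nondecreasing and $0\le\theta\le2$, and \eqref{ineq:Caccioppoli} bounds the second factor by $c\sum_i\int\psi''_\epsilon(u_{x_i})\tilde G(u_{x_k})^2\eta_{x_i}^2$. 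It remains to show
\[
\tilde G(s)^2\lesssim |s|^{2\theta}G(s^2)^{2-\theta}.
\]
To do this, use monotonicity of $G'$ to get $\tilde G(s)\le \tfrac12 s^2G'(s^2)^{1-\theta/2}$. Convexity together with $G\ge0$ gives $\tau G'(\tau)\le 2G(2\tau)-G(\tau)$, which is not directly $\lesssim G(\tau)$. So we instead write
\[
\tilde G(s)=\int_0^{|s|}t\,G'(t^2)^{1-\theta/2}\,dt
\]
and apply Hölder's inequality with exponents $\tfrac{2}{\theta}$ and $\tfrac{2}{2-\theta}$:
\[
\tilde G(s)\le \Bigl(\int_0^{|s|}t\,dt\Bigr)^{\theta/2}\Bigl(\int_0^{|s|}t\,G'(t^2)\,dt\Bigr)^{1-\theta/2}.
\]
Since $\int_0^{|s|}tG'(t^2)\,dt=\tfrac12\bigl(G(s^2)-G(0)\bigr)\le\tfrac12 G(s^2)$, this gives $\tilde G(s)\lesssim |s|^{\theta}G(s^2)^{1-\theta/2}$, i.e.\ the claim. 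The constants depend only on $n,p,q$ (really only on absolute numbers here).
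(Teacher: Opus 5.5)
Your proposal is correct and follows the paper's proof essentially step for step: the same test function $\zeta=\eta^2u_{x_j}F(u_{x_j}^2)G(u_{x_k}^2)$, Young's inequality and absorption for the cutoff term, the splitting $G'(u_{x_k}^2)=G'(u_{x_k}^2)^{\theta/2}\,G'(u_{x_k}^2)^{1-\theta/2}$ followed by Cauchy--Schwarz, and \eqref{ineq:Caccioppoli} applied in direction $x_k$ to an auxiliary function that is exactly half of the paper's $\widetilde G(t)=\int_0^{t^2}G'(\tau)^{1-\theta/2}\,d\tau$, closed by the same H\"older/Jensen bound. One small slip: the derivative should be $\tilde G'(t)=t\,G'(t^2)^{1-\theta/2}$, not $|t|\,G'(t^2)^{1-\theta/2}$, since the latter makes $\tilde G$ odd and nonconvex on $(-\infty,0)$; your later formula $\tilde G(s)=\int_0^{|s|}t\,G'(t^2)^{1-\theta/2}\,dt$ is the correct even, nonnegative, convex $C^1$ choice.
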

\begin{proof}
By a standard approximation argument, we may assume that $F$ is $C^1$.
We take a function $\zeta := u_{x_j} F(u^2_{x_j})G(u^2_{x_k})\eta^2$ into \eqref{weakform2} to derive 
\[
\begin{split}
&\sum_{i=1}^n \int_{B_{2}} \psi_{\epsilon}'' (u_{x_i}) u^2_{x_i x_j} \big[ F(u^2_{x_j}) +2u^2_{x_j} F'(u^2_{x_j}) \big] G(u^2_{x_k}) \eta^2 \,dx \\
& =  -2  \sum_{i=1}^n \int_{B_{2}} \psi_{\epsilon}'' (u_{x_i}) u_{x_i x_j} u_{x_j} F(u^2_{x_j})G(u^2_{x_k})\eta \,\eta_{x_i}\, dx \\
& \qquad - 2 \sum_{i=1}^n \int_{B_{2}} \psi_{\epsilon}'' (u_{x_i}) u_{x_i x_j} u_{x_j} u_{x_i x_k} u_{x_k} F(u^2_{x_j}) G'(u^2_{x_k}) \eta^2 \,dx \\
& \le  \frac12  \sum_{i=1}^n \int_{B_{2}} \psi_{\epsilon}'' (u_{x_i}) u_{x_i x_j}^2 F(u^2_{x_j})G(u^2_{x_k})\eta^2\, dx \\
&\qquad +  8  \sum_{i=1}^n \int_{B_{2}} \psi_{\epsilon}'' (u_{x_i}) u_{x_j}^2 F(u^2_{x_j})G(u^2_{x_k})\eta_{x_i}^2\, dx \\
& \qquad - 2 \sum_{i=1}^n \int_{B_{2}} \psi_{\epsilon}'' (u_{x_i}) u_{x_i x_j} u_{x_j} u_{x_i x_k} u_{x_k} F(u^2_{x_j}) G'(u^2_{x_k}) \eta^2 \,dx,
\end{split}
\]
where we used Young's inequality in the last inequality. 
Then the first term on the right-hand side of the above inequality can be absorbed into the left-hand side, yielding
\begin{equation}\label{ineq:mid}
\begin{split}
&\frac12\sum_{i=1}^n \int_{B_{2}} \psi_{\epsilon}'' (u_{x_i}) u^2_{x_i x_j}  F(u^2_{x_j}) G(u^2_{x_k}) \eta^2 \,dx \\
&\le   8  \sum_{i=1}^n \int_{B_{2}} \psi_{\epsilon}'' (u_{x_i}) u_{x_j}^2 F(u^2_{x_j})G(u^2_{x_k})\eta_{x_i}^2\, dx \\
& \qquad - 2 \sum_{i=1}^n \int_{B_{2}} \psi_{\epsilon}'' (u_{x_i}) u_{x_i x_j} u_{x_j} u_{x_i x_k} u_{x_k} F(u^2_{x_j}) G'(u^2_{x_k}) \eta^2 \,dx.
\end{split}
\end{equation}

We now estimate the last term in \eqref{ineq:mid}.
Writing $G'(u^2_{x_k})=G'(u^2_{x_k})^{\frac{\theta}{2}}G'(u^2_{x_k})^{1-\frac{\theta}{2}}$ and applying Young's inequality, we obtain
\begin{equation}\label{ineq:mid_sub}
\begin{split}
&- 2 \sum_{i=1}^n \int_{B_{2}} \psi_{\epsilon}'' (u_{x_i}) u_{x_i x_j} u_{x_j} u_{x_i x_k} u_{x_k} F(u^2_{x_j}) G'(u^2_{x_k}) \eta^2 \,dx  \\
 &=  - 2 \sum_{i=1}^n \int_{B_{2}} \psi_{\epsilon}'' (u_{x_i}) u_{x_i x_j} u_{x_j} u_{x_i x_k} u_{x_k} F(u^2_{x_j}) G'(u^2_{x_k})^{\frac{\theta}2} G'(u^2_{x_k})^{1-\frac{\theta}2} \eta^2 \,dx\\
 & \le 2 \bigg(\sum_{i=1}^n \int_{B_{2}} \psi_{\epsilon}'' (u_{x_i}) u^2_{x_i x_j} u^2_{x_j} F(u^2_{x_j})^2 G'(u^2_{x_k})^{\theta} \eta^2 \,dx\bigg)^{\frac12}\\
&\qquad \times \bigg(\sum_{i=1}^n \int_{B_{2}} \psi_{\epsilon}'' (u_{x_i})  u^2_{x_i x_k} u^2_{x_k}  G'(u^2_{x_k})^{2-\theta} \eta^2 \,dx\bigg)^{\frac12}.
\end{split}
\end{equation} 
Here, we notice that
\[
\sum_{i=1}^n \int_{B_{2}} \psi_{\epsilon}'' (u_{x_i})  u^2_{x_i x_k} u^2_{x_k}  G'(u^2_{x_k})^{2-\theta} \eta^2 \,dx 
= \frac14 \sum_{i=1}^n \int_{B_{2}} \psi_{\epsilon}'' (u_{x_i}) \big| (G(u_{x_k}))_{x_i} \big|^2  \eta^2 \,dx,
\]
where
$G$ is the nonnegative convex $C^1$-function defined by 
\[
\widetilde G(t) = \int_{0}^{t^2} G'(\tau)^{1-\frac{\theta}2} \, d\tau.
\]
Then by applying \eqref{ineq:Caccioppoli} with $\widetilde G$ and $x_k$ instead of $G$ and $x_j$, respectively, we obtain
\[
\begin{split}
&\sum_{i=1}^n \int_{B_{2}} \psi_{\epsilon}'' (u_{x_i})  u^2_{x_i x_k} u^2_{x_k}  G'(u^2_{x_k})^{2-\theta} \eta^2 \,dx 
%= \frac14 \sum_{i=1}^n \int_{B_{2}} \psi_{\epsilon}'' (u_{x_i}) \big| (G(u_{x_k}))_{x_i} \big|^2  \eta^2 \,dx\\
%&\quad
\le c  \sum_{i=1}^n \int_{B_{2}} \psi_{\epsilon}'' (u_{x_i}) \widetilde G(u_{x_k})^2 \eta_{x_i}^2 \,dx. 
\end{split}
\]
Note that by Jensen's inequality with the fact that $G(0) \ge 0$,
\[
0\le \widetilde G(u_{x_k}) =  \int_{0}^{u_{x_k}^2} G'(\tau)^{1-\frac{\theta}2} \, d\tau \le |u_{x_k}|^{\theta} \bigg( \int_{0}^{u_{x_k}^2} G'(\tau) \, d\tau\bigg)^{1-\frac{\theta}2} \le |u_{x_k}|^\theta   G(u^2_{x_k})^{1-\frac{\theta}2}.
\]
Then we see that
\[
\begin{split}
&\sum_{i=1}^n \int_{B_{2}} \psi_{\epsilon}'' (u_{x_i})  u^2_{x_i x_k} u^2_{x_k}  G'(u^2_{x_k})^{2-\theta} \eta^2 \,dx 
%= \frac14 \sum_{i=1}^n \int_{B_{2}} \psi_{\epsilon}'' (u_{x_i}) \big| (G(u_{x_k}))_{x_i} \big|^2  \eta^2 \,dx\\
%&\quad
\le c  \sum_{i=1}^n \int_{B_{2}} \psi_{\epsilon}'' (u_{x_i}) |u_{x_k}|^{2\theta}   G(u^2_{x_k})^{2-\theta} \eta_{x_i}^2 \,dx. 
\end{split}
\]
Therefore, combining  \eqref{ineq:mid}   and \eqref{ineq:mid_sub} with the previous inequality, we derive 
the desired inequality \eqref{ineq:wCaccioppoli}.
\end{proof}

Let us consider a particular case of Lemma~\ref{lem:wCaccioppoli} by taking 
\begin{equation}\label{eq:FG}
F(t)=t^{s-1} \ \text{ and }\  G(t)=t^m 
\quad  \text {for }\  1\le s\le m \ \text{ and }\ t \ge 0.
\end{equation}
\begin{proposition}\label{prop:preCaccioppoli}
Let $\eta \in C^{\infty}_0(B_{2})$. Then for every
$k, j= 1,2, \dots, n$ and $1\le s \le m$, we have 
\begin{equation}\label{ineq:fCaccioppoli}
\begin{aligned}
&\sum_{i=1}^n \int_{B_{2}} \psi_{\epsilon}'' (u_{x_i}) u^2_{x_i x_j} |u_{x_j}|^{2s-2}|u_{x_k}|^{2m} \eta^2 \,dx \\
& \le\sum_{i=1}^n \int_{B_{2}} \psi_{\epsilon}'' (u_{x_i}) u^2_{x_i x_j} |u_{x_j}|^{4s-2}|u_{x_k}|^{2m-2s} \eta^2 \,dx + c  \sum_{i=1}^n \int_{B_{2}} \psi_{\epsilon}'' (u_{x_i}) |u_{x_j}|^{2s+2m} \eta_{x_i}^2 \,dx \\
& \qquad + 
c(m+1) \sum_{i=1}^n \int_{B_{2}} \psi_{\epsilon}'' (u_{x_i}) |u_{x_k}|^{2s+2m} \eta_{x_i}^2 \,dx
\end{aligned}
\end{equation}
for some $c=c(n, p, q)>0.$

\end{proposition}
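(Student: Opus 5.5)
The plan is to apply Lemma~\ref{lem:wCaccioppoli} with the choice \eqref{eq:FG}, namely $F(t)=t^{s-1}$ and $G(t)=t^m$. This $F$ is nondecreasing and continuous because $s\ge1$, and this $G$ is $C^1$ and convex because $m\ge1$. With these choices the left-hand side of \eqref{ineq:wCaccioppoli} is exactly the left-hand side of \eqref{ineq:fCaccioppoli}. The first term on the right is
\[
c\sum_{i=1}^n\int_{B_2}\psi_\epsilon''(u_{x_i})\,|u_{x_j}|^{2s}|u_{x_k}|^{2m}\eta_{x_i}^2\,dx .
\]
Young's inequality with exponents $\frac{s+m}{s}$ and $\frac{s+m}{m}$ splits it into the two terms with $|u_{x_j}|^{2s+2m}$ and $|u_{x_k}|^{2s+2m}$.

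Next I choose $\theta$ so that the mixed product term has matching powers. Since $G'(t)=mt^{m-1}$, we get $G'(u_{x_k}^2)^\theta=m^\theta|u_{x_k}|^{2\theta(m-1)}$. The first factor is then
\[
m^{\theta}\sum_i\int\psi_\epsilon''(u_{x_i})\,u_{x_ix_j}^2\,|u_{x_j}|^{4s-2}\,|u_{x_k}|^{2\theta(m-1)}\eta^2 ,
\]
and to match the first term on the right of \eqref{ineq:fCaccioppoli} we need $\theta(m-1)=m-s$. So I take $\theta=\frac{m-s}{m-1}\in[0,1]\subset[0,2]$ when $m>1$. When $m=1$ we have $s=1$, $G'$ is constant, and any $\theta$ works, say $\theta=0$. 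The second factor becomes
\[
\sum_i\int\psi_\epsilon''(u_{x_i})\,|u_{x_k}|^{2\theta+2m(2-\theta)}\eta_{x_i}^2 ,
\]
and we need to check the exponent: $2\theta+4m-2m\theta=4m-2\theta(m-1)=4m-2(m-s)=2m+2s$. So this factor is exactly the $|u_{x_k}|^{2s+2m}$ term.

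Finally I apply Young's inequality $ab\le \frac12 a^2+\frac12 b^2$ to $c\,m^{\theta/2}\,A^{1/2}B^{1/2}$, giving coefficient $1$ on $A$ and $c^2m^\theta$ on $B$. Since $m^\theta\le m\le m+1$ for $\theta\le1$, this produces the factor $c(m+1)$ in \eqref{ineq:fCaccioppoli}.

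The step needing most care is bookkeeping of constants. The constant in Lemma~\ref{lem:wCaccioppoli} must not depend on $F,G$ (it depends only on $n,p,q$, coming from the Young steps and \eqref{ineq:Caccioppoli}), and the only $m$-dependence must be the explicit $G'$ factor $m^\theta$. I would also check the borderline case $s=m$, where $\theta=0$ and the $u_{x_k}$-factor reduces to $G^2$, which is consistent. One point to verify: if the constant $c$ in \eqref{ineq:Caccioppoli} were large, the Young splitting in the last step would leave coefficient $c$ rather than $1$ on the first term. To get coefficient exactly $1$, I would use Young in the form $ab\le a^2+\frac14 b^2$ and put the whole constant onto $B$.
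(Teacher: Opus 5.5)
Your proposal is correct and follows essentially the same route as the paper. Both apply the weird Caccioppoli inequality with $F(t)=t^{s-1}$, $G(t)=t^m$ and $\theta=\frac{m-s}{m-1}$. Both then absorb the mixed product by Young's inequality, placing coefficient $1$ on the first factor. The $|u_{x_j}|^{2s}|u_{x_k}|^{2m}$ term is split with exponents $\frac{s+m}{s},\frac{s+m}{m}$, and $m^\theta\le m$ bounds the remaining factor. The only difference is cosmetic: you take $\theta=0$ when $m=1$, while the paper takes $\theta=1$. Either choice works, since $G'$ is constant there.
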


\begin{proof}
We apply Lemma~\ref{lem:wCaccioppoli} with \eqref{eq:FG} and 
\[
\theta =
\begin{cases}
\frac{m-s}{m-1} \in [0,1] &\text{if}\ \ m > 1,\\
1&\text{if} \ \ m=1,
\end{cases}
\]
in order to discover that
\[
\begin{split}
&\sum_{i=1}^n \int_{B_{2}} \psi_{\epsilon}'' (u_{x_i}) u^2_{x_i x_j}  |u_{x_j}|^{2s-2} |u_{x_k}|^{2m} \eta^2 \,dx \\
& \le    c  \sum_{i=1}^n \int_{B_{2}} \psi_{\epsilon}'' (u_{x_i}) u_{x_j}^2 |u_{x_j}|^{2s-2}|u_{x_k}|^{2m} \eta_{x_i}^2\, dx \\
&\qquad  + c \bigg( m^{\theta}\sum_{i=1}^n \int_{B_{2}} \psi_{\epsilon}'' (u_{x_i}) u^2_{x_i x_j} |u_{x_j}|^{4s-2} |u_{x_k}|^{2m-2s} \eta^2 \,dx\bigg)^{\frac12} \\
&\qquad\qquad \times \bigg(\sum_{i=1}^n \int_{B_{2}} \psi_{\epsilon}'' (u_{x_i}) |u_{x_k}|^{2m+2s}  \eta_{x_i}^2 \,dx \bigg)^{\frac12}.
\end{split}
\]
Then by using Young's inequality,
% in the form $c (ab)^{\frac12} \le a+\frac{1}{4} c^2 b $, 
we obtain
\[
\begin{split}
&\sum_{i=1}^n \int_{B_{2}} \psi_{\epsilon}'' (u_{x_i}) u^2_{x_i x_j}  |u_{x_j}|^{2s-2} |u_{x_k}|^{2m} \eta^2 \,dx \\
%& \le    c  \sum_{i=1}^n \int_{B_{2}} \psi_{\epsilon}'' (u_{x_i})  |u_{x_j}|^{2s}|u_{x_k}|^{2m} |\nabla\eta|^2\, dx \\
%& + c \bigg( m^{\theta}\sum_{i=1}^n \int_{B_{2}} \psi_{\epsilon}'' (u_{x_i}) u^2_{x_i x_j} |u_{x_j}|^{4s-2} |u_{x_k}|^{2m-2s} \eta^2 \,dx\bigg)^{\frac12} 
%\bigg(\sum_{i=1}^n \int_{B_{2}} \psi_{\epsilon}'' (u_{x_i}) |u_{x_k}|^{2m+2s}  |\nabla\eta|^2 \,dx \bigg)^{\frac12} \\
&\le \sum_{i=1}^n \int_{B_{2}} \psi_{\epsilon}'' (u_{x_i}) u^2_{x_i x_j} |u_{x_j}|^{4s-2} |u_{x_k}|^{2m-2s} \eta^2 \,dx 
+c  \sum_{i=1}^n \int_{B_{2}} \psi_{\epsilon}'' (u_{x_i})  |u_{x_j}|^{2s}|u_{x_k}|^{2m} |\nabla\eta|^2\, dx \\
&  \qquad + c\, m^{\theta} \sum_{i=1}^n \int_{B_{2}} \psi_{\epsilon}'' (u_{x_i}) |u_{x_k}|^{2m+2s}  |\nabla\eta|^2 \,dx. 
\end{split}
\]
The desired estimates follow by applying Young's inequality with exponents $ \frac{2m+2s}{2s}, \frac{2m+2s}{2m}$ 
to the second term on the right-hand side, and by using the fact that $m^\theta \le m$ for the last term.
\end{proof}

%{\color{red}Change $q=> \bar{\ell}$.}

Finally, we have arrived at the following Caccioppoli type estimate for power functions of the gradient of $u$.
\begin{proposition}\label{prop:caccioppoli}
Let $\bar{\ell} = 2^{\ell_0}-1$ with  $\ell_0 \in \mathbb{N}$ and $\eta \in C^{\infty}_0(B_{2})$.
Then for every $k=1,2,\dots,n$, we have 
\begin{equation}\label{ineq:gCaccioppoli}
\begin{split}
&  \int_{B_{2}} \left| \nabla \left(|u_{x_k}|^{\bar{\ell}} \big[\psi_\epsilon( u_{x_k})\big]^{\frac12} \right)\right|^2 \eta^2 \,dx \\
& \le 
 c \, \bar{\ell}^{\,4}  \sum_{i, j =1}^n \int_{B_{2}} \psi_{\epsilon}'' (u_{x_i}) |u_{x_j}|^{2\bar{\ell}+2 }  \eta_{x_i}^2 \,dx + 
c\,\bar{\ell}^{\, 4}
 \sum_{i=1}^n \int_{B_{2}} \psi_{\epsilon}'' (u_{x_i}) |u_{x_k}|^{2\bar{\ell}+2 }\eta_{x_i}^2 \,dx,
\end{split}
\end{equation}
for some $c = c (n,p,q)>0$.
\end{proposition}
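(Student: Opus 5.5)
The plan is to first bound the left-hand side of \eqref{ineq:gCaccioppoli} pointwise by a weighted Hessian integral of the form appearing in Proposition~\ref{prop:preCaccioppoli} with $s=1$, $m=\bar\ell$. Then I iterate Proposition~\ref{prop:preCaccioppoli} along a dyadic staircase of exponents, and close the chain with the standard Caccioppoli inequality \eqref{ineq:Caccioppoli}. The choice $\bar\ell=2^{\ell_0}-1$ is exactly what makes the staircase terminate after $\ell_0$ steps.

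\emph{Step 1 (pointwise reduction).} Set $h(t):=|t|^{\bar\ell}\psi_\epsilon(t)^{1/2}$, so that
\[
h'(t)=\bar\ell\,|t|^{\bar\ell-2}t\,\psi_\epsilon^{1/2}+\tfrac12|t|^{\bar\ell}\psi_\epsilon'/\psi_\epsilon^{1/2}.
\]
From \eqref{eq:g'g''} I expect the comparisons $\psi_\epsilon(t)\le t\psi_\epsilon'(t)$ (since $\psi_\epsilon'$ is increasing) and $t\psi_\epsilon'(t)\le \max\{q,2\}\,\psi_\epsilon(t)$ (integrating the upper bound in \eqref{eq:g'g''}). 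I also expect $\psi_\epsilon'(t)\le c\,t\,\psi_\epsilon''(t)$ (the lower bound in \eqref{eq:g'g''}). Together these should give
\[
|h'(t)|^2\le c\,\bar\ell^{2}|t|^{2\bar\ell}\psi_\epsilon''(t).
\]
Hence
\[
|\nabla h(u_{x_k})|^2\le c\bar\ell^2\sum_j \psi_\epsilon''(u_{x_k})u_{x_kx_j}^2|u_{x_k}|^{2\bar\ell}.
\]
Since $u_{x_kx_j}=u_{x_jx_k}$, each summand is the $i=k$ term of $\sum_i\psi_\epsilon''(u_{x_i})u_{x_ix_j}^2|u_{x_k}|^{2\bar\ell}$. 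Therefore the left-hand side of \eqref{ineq:gCaccioppoli} is at most
\[
c\bar\ell^2\sum_{j}\sum_i\int\psi_\epsilon''(u_{x_i})u^2_{x_ix_j}|u_{x_j}|^{0}|u_{x_k}|^{2\bar\ell}\eta^2\,dx,
\]
which is the left side of \eqref{ineq:fCaccioppoli} with $(s,m)=(1,\bar\ell)$.

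\emph{Step 2 (staircase).} For fixed $j$, apply \eqref{ineq:fCaccioppoli} with $(s,m)=(2^\ell,2^{\ell_0}-2^\ell)$ for $\ell=0,1,\dots,\ell_0-1$; the condition $s\le m$ holds throughout. The first term on the right of \eqref{ineq:fCaccioppoli} has exponents $|u_{x_j}|^{4s-2}|u_{x_k}|^{2m-2s}$, i.e. it is the left side with $(s,m)$ replaced by $(2s,m-s)=(2^{\ell+1},2^{\ell_0}-2^{\ell+1})$. The coefficient in front of it is $1$, so chaining causes no geometric growth of constants. Throughout, $2s+2m=2^{\ell_0+1}=2\bar\ell+2$. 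Each step therefore produces error terms
\[
c\sum_i\int\psi_\epsilon''(u_{x_i})|u_{x_j}|^{2\bar\ell+2}\eta_{x_i}^2+c(\bar\ell+1)\sum_i\int\psi_\epsilon''(u_{x_i})|u_{x_k}|^{2\bar\ell+2}\eta_{x_i}^2.
\]
Over $\ell_0\le\bar\ell$ steps the accumulated constant is at most $c\bar\ell^2$.

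\emph{Step 3 (closing the chain).} After the last step we reach $(s,m)=(2^{\ell_0},0)$, i.e. the term $\sum_i\int\psi_\epsilon''(u_{x_i})u^2_{x_ix_j}|u_{x_j}|^{2\bar\ell}\eta^2$, which no longer involves $u_{x_k}$. Take $G(t)=|t|^{\bar\ell+1}$, which is $C^1$ and convex, in \eqref{ineq:Caccioppoli}. Since $u^2_{x_ix_j}|u_{x_j}|^{2\bar\ell}=(\bar\ell+1)^{-2}|(G(u_{x_j}))_{x_i}|^2$, this term is bounded by $c\sum_i\int\psi_\epsilon''(u_{x_i})|u_{x_j}|^{2\bar\ell+2}\eta_{x_i}^2$. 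Summing over $j$ and multiplying by the factor $c\bar\ell^2$ from Step 1 gives the claimed $c\bar\ell^4$ bound.

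The main obstacle I anticipate is Step 1. The difficulty is to verify the $\epsilon$-uniform inequality $|h'|^2\lesssim\bar\ell^2|t|^{2\bar\ell}\psi_\epsilon''$, with constants depending only on $p,q$, including near $t=0$ where the shift $\epsilon$ matters. For this I would work directly with \eqref{eq:g''} rather than with the growth of $\phi$ itself. The staircase bookkeeping in Steps 2 and 3 is routine once the exponent pattern is fixed.
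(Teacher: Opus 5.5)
Your proposal is correct and is essentially the paper's proof. It uses the same three ingredients: the dyadic staircase $(s_\ell,m_\ell)=(2^\ell,2^{\ell_0}-2^\ell)$ through Proposition~\ref{prop:preCaccioppoli}, the closing step via \eqref{ineq:Caccioppoli} with $G(t)=|t|^{\bar\ell+1}$ (up to normalization), and the pointwise bound $|h'(t)|^2\le c\,\bar\ell^{2}|t|^{2\bar\ell}\psi_\epsilon''(t)$ derived from \eqref{eq:g'g''}. The only difference is cosmetic: the paper performs the pointwise reduction last, after obtaining the Hessian estimate \eqref{est:afterCaccioppoli}, whereas you do it first.
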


\begin{proof}
Let us define two families of indices $\{ s_\ell\}$ and $\{ m_\ell\}$ such that 
\[
s_\ell := 2^\ell
\ \text{ and }\ 
m_\ell := \bar{\ell}+1-2^\ell =\bar{\ell}+1 -s_{\ell}
\qquad \text{for }\  \ell \in \{ 0, \dots, \ell_0\}.
\]
Note that $s_0 =1,\ m_0=\bar{\ell}, \ s_{\ell_0} = 2^{\ell_0}, \ m_{\ell_0}=0$, $1\le s_\ell \le m_\ell$ for $\ell \in \{ 0, \dots, \ell_0-1\}$, and $2m_\ell -2s_\ell = 2m_{\ell+1}$.
Then inequality \eqref{ineq:fCaccioppoli} yields that 
\[\begin{split}
&\sum_{i=1}^n \int_{B_{2}} \psi_{\epsilon}'' (u_{x_i}) u^2_{x_i x_j} |u_{x_j}|^{2s_\ell -2}|u_{x_k}|^{2m_\ell } \eta^2 \,dx \\
& \le \sum_{i=1}^n \int_{B_{2}} \psi_{\epsilon}'' (u_{x_i}) u^2_{x_i x_j} |u_{x_j}|^{2s_{\ell+1} -2}|u_{x_k}|^{2m_{\ell+1} } \eta^2 \,dx + c  \sum_{i=1}^n \int_{B_{2}} \psi_{\epsilon}'' (u_{x_i}) |u_{x_j}|^{2\bar{\ell}+2 } \eta_{x_i}^2 \,dx \\
& \qquad + c(m_\ell+1) \sum_{i=1}^n \int_{B_{2}} \psi_{\epsilon}'' (u_{x_i}) |u_{x_k}|^{2\bar{\ell}+2 } \eta_{x_i}^2 \,dx
\end{split}
\]
for some constant $c>0$. Proceeding by iteration of the previous estimate from $\ell=0$ to $\ell=\ell_0-1$, we deduce that
\[\begin{split}
&\sum_{i=1}^n \int_{B_{2}} \psi_{\epsilon}'' (u_{x_i}) u^2_{x_i x_j} |u_{x_k}|^{2\bar{\ell} } \eta^2 \,dx \\
& \le 
\sum_{i=1}^n \int_{B_{2}} \psi_{\epsilon}'' (u_{x_i}) u^2_{x_i x_j} |u_{x_j}|^{2\bar{\ell}} \eta^2 \,dx 
+ c\, \ell_0  \sum_{i=1}^n \int_{B_{2}} \psi_{\epsilon}'' (u_{x_i}) |u_{x_j}|^{2\bar{\ell}+2 } \eta_{x_i}^2 \,dx \\
& \qquad +c \left((\ell_0-1)2^{\ell_0} +1\right)
 \sum_{i=1}^n \int_{B_{2}} \psi_{\epsilon}'' (u_{x_i}) |u_{x_k}|^{2\bar{\ell}+2 }  \eta_{x_i}^2 \,dx
\end{split}
\]
for a universal constant $c >0$. Applying the Caccioppoli type inequality \eqref{ineq:Caccioppoli} with 
$
G(t) = \frac{|t|^{\bar{\ell}+1}}{\bar{\ell}+1},
$
we obtain 
\[\begin{split}
\sum_{i=1}^n \int_{B_{2}} \psi_{\epsilon}'' (u_{x_i})u^2_{x_i x_j} |u_{x_j}|^{2\bar{\ell}} \eta^2 \,dx &=\sum_{i=1}^n \int_{B_{2}} \psi_{\epsilon}'' (u_{x_i}) \bigg|\left(\frac{|u_{x_j}|^{\bar{\ell}+1}}{\bar{\ell}+1}\right)_{x_i} \bigg|^2 \eta^2 \,dx \\
%&\le c  \sum_{i=1}^n \int_{B_{2}} \psi_{\epsilon}'' (u_{x_i}) \bigg|\frac{|u_{x_j}|^{\bar{\ell}+1}}{\bar{\ell}+1}\bigg|^2 |\eta_{x_i}|^2 \,dx 
& \le \frac{c}{(\bar{\ell}+1)^2}  \sum_{i=1}^n \int_{B_{2}} \psi_{\epsilon}'' (u_{x_i}) |u_{x_j}|^{2\bar{\ell}+2}\eta_{x_i}^2 \,dx.
\end{split}
\]
Combining this with the previous inequality, we obtain that
\[\begin{split}
&\sum_{i=1}^n \int_{B_{2}} \psi_{\epsilon}'' (u_{x_i}) u^2_{x_i x_j} |u_{x_k}|^{2\bar{\ell} } \eta^2 \,dx \\
& \le 
 c\, \bar{\ell}  \sum_{i=1}^n \int_{B_{2}} \psi_{\epsilon}'' (u_{x_i}) |u_{x_j}|^{2\bar{\ell}+2 } \eta_{x_i}^2 \,dx + 
c\,\bar{\ell}^{\, 2} \sum_{i=1}^n \int_{B_{2}} \psi_{\epsilon}'' (u_{x_i}) |u_{x_k}|^{2\bar{\ell}+2 } \eta_{x_i}^2 \,dx\\
& \qquad +%\sum_{i=1}^n \int_{B_{2}} \psi_{\epsilon}'' (u_{x_i}) u^2_{x_i x_j} |u_{x_j}|^{2q} \eta^2 \,dx\\
 \frac{c}{(\bar{\ell}+1)^2}  \sum_{i=1}^n \int_{B_{2}} \psi_{\epsilon}'' (u_{x_i}) |u_{x_j}|^{2\bar{\ell}+2} \eta_{x_i}^2 \,dx,
\end{split}
\]
which implies that 
\begin{equation}\label{est:afterCaccioppoli}
\begin{split}
&\sum_{i=1}^n \int_{B_{2}} \psi_{\epsilon}'' (u_{x_i}) u^2_{x_i x_j} |u_{x_k}|^{2\bar{\ell} } \eta^2 \,dx \\
& \le 
 c \, \bar{\ell}  \sum_{i=1}^n \int_{B_{2}} \psi_{\epsilon}'' (u_{x_i}) |u_{x_j}|^{2\bar{\ell}+2 } \eta_{x_i}^2 \,dx + 
c\, \bar{\ell}^{\, 2}
 \sum_{i=1}^n \int_{B_{2}} \psi_{\epsilon}'' (u_{x_i}) |u_{x_k}|^{2\bar{\ell}+2 }  \eta_{x_i}^2 \,dx.\\
\end{split}
\end{equation}

We note from \eqref{eq:g'g''}, which also implies $ \psi_\epsilon(t) \approx t \psi_{\epsilon}'(t)$, that 
\[\begin{split}
\left| \left(|u_{x_k}|^{\bar{\ell}} \big[\psi_\epsilon( u_{x_k})\big]^{\frac12} \right)_{x_j}\right|^2 
& \le 2 \, \bar{\ell}^{\, 2} |u_{x_k}|^{2\bar{\ell}-2} \psi_\epsilon( u_{x_k}) u_{x_k x_j}^2 +  |u_{x_k}|^{2\bar{\ell}} \big[\psi_\epsilon( u_{x_k})  \big]^{-1} | \psi_\epsilon'( u_{x_k})|^2 u_{x_k x_j}^2  \\
& \le  c \bar{\ell}^{\, 2} |u_{x_k}|^{2\bar{\ell}} \psi_\epsilon''( u_{x_k}) u_{x_k x_j}^2
 \end{split}\]
 for some constant $c = c(n, p,q)>0$.
Then  it follows that
 \[\begin{split}
 \int_{B_{2}} \left| \left(|u_{x_k}|^{\bar{\ell}} \big[\psi_\epsilon( u_{x_k})\big]^{\frac12} \right)_{x_j}\right|^2 \eta^2  \,dx 
&\le  c\, \bar{\ell}^{\, 2} \int_{B_{2}} \psi_{\epsilon}'' (u_{x_k}) u^2_{x_k x_j} |u_{x_k}|^{2\bar{\ell} } \eta^2 \,dx\\
&\le  c \, \bar{\ell}^{\, 2}\sum_{i=1}^n \int_{B_{2}} \psi_{\epsilon}'' (u_{x_i}) u^2_{x_i x_j} |u_{x_k}|^{2\bar{\ell} } \eta^2 \,dx\\
& \le 
 c\, \bar{\ell}^{\, 4}  \sum_{i=1}^n \int_{B_{2}} \psi_{\epsilon}'' (u_{x_i}) |u_{x_j}|^{2\bar{\ell}+2 } \eta_{x_i}^2 \,dx \\
 &\qquad + 
c\, \bar{\ell}^{\, 4}
 \sum_{i=1}^n \int_{B_{2}} \psi_{\epsilon}'' (u_{x_i}) |u_{x_k}|^{2\bar{\ell}+2 } \eta_{x_i}^2 \,dx,
\end{split}
\]
where the last inequality follows from   
\eqref{est:afterCaccioppoli}. Therefore, after summing over $j =1,\dots, n$, we obtain \eqref{ineq:gCaccioppoli}.
\end{proof}

\section{Proof of Main Theorem}
%To prove our main theorem, Theorem~\ref{thm:main}, it suffices to derive the uniform Lipschitz estimate of Proposition~\ref{prop:unifLip} below.
To prove our main theorem, Theorem~\ref{thm:main}, we first establish  the uniform Lipschitz estimates for the weak solution $u^\epsilon$ to the regularized problem \eqref{PDE_regularized_BC} under the normalization assumption \eqref{assump:normal}.
%under the assumption \eqref{assump:normal}. 
Furthermore, as in the theorem, 
we assume that 
\begin{equation}\label{eq:pq}
2\le p \le q.
\end{equation}

\begin{proposition}[Uniform Lipschitz estimate]\label{prop:unifLip}
Suppose  \eqref{eq:pq} holds and
\begin{equation}\label{assump:normal}
B_2\Subset\Omega
\quad \text{and}\quad
\fint_{B_2} \phi(|\nabla u|) \,dx \le 1.
\end{equation} 
Let $u^\epsilon$ be the weak solution to \eqref{PDE_regularized_BC} with $r=1$.
Then
\begin{equation}\label{est:unifLip}
\|\nabla u^\epsilon \|_{L^{\infty}(B_{1/2})}  \le c
\end{equation}
%$B_r \subset B_{2r} \Subset \Omega$ 
%and $\sigma>0$ depending on $n$ and $p$. 
for some $c=c(n,p,q)>0$.
\end{proposition}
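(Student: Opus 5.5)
The plan is to run a Moser iteration on the quantities $w_k:=|u_{x_k}|^{\bar\ell}[\psi_\epsilon(u_{x_k})]^{1/2}$, with $u=u^\epsilon$ as in the previous section. The weights $\psi_\epsilon''(u_{x_i})$ on the right-hand side of \eqref{ineq:gCaccioppoli} are converted into powers of the gradient, and the resulting gaps between consecutive exponents are closed by interpolation.

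First, by \eqref{eq:g''} and the monotonicity in Assumption~\ref{assump:growth2}, we have $\psi_\epsilon''(t)\lesssim \tilde\phi_\epsilon'(\epsilon+|t|)/(\epsilon+|t|)$. Since $p\ge2$, the map $t\mapsto\tilde\phi'_\epsilon(t)/t$ is nondecreasing, hence
\[
\psi_\epsilon''(u_{x_i})\lesssim \frac{\psi_\epsilon(1+|\nabla u|)}{(1+|\nabla u|)^2}.
\]
Setting $V:=1+|\nabla u|$ and using $p\ge 2$ once more, the right-hand side of \eqref{ineq:gCaccioppoli} is bounded by
\[
c\,\bar\ell^{\,4}\,\|\nabla\eta\|_\infty^2\int_{\mathrm{supp}\,\eta}\psi_\epsilon(V)\,V^{2\bar\ell}\,dx .
\]
On the left-hand side, summing in $k$ and using $\sum_k|u_{x_k}|^{2\bar\ell}\psi_\epsilon(u_{x_k})\gtrsim \psi_\epsilon(|\nabla u|)|\nabla u|^{2\bar\ell}-c$ (the $\psi_\epsilon$-terms are comparable to $\psi_\epsilon(|\nabla u|)$ up to the additive error $\tilde\phi_\epsilon(\epsilon)\le c$), the Sobolev inequality applied to $w_k\eta$ gives, with $\chi=\frac{n}{n-2}$ (or any $\chi>1$ if $n=2$) and $r<R\le 1$,
\[
\Big(\int_{B_r}\big(\psi_\epsilon(V)V^{2\bar\ell}\big)^{\chi}\Big)^{1/\chi}\le \frac{c\,\bar\ell^{\,4}}{(R-r)^2}\int_{B_R}\psi_\epsilon(V)V^{2\bar\ell}\,dx .
\]

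The obstacle is that $\bar\ell$ must be of the form $2^{\ell_0}-1$, so the exponent $2\bar\ell+2=2^{\ell_0+1}$ only runs through powers of $2$. Consecutive exponents have ratio roughly $2$, which may exceed $\chi$ when $n$ is large, so the gain from Sobolev does not reach the next admissible exponent. I would close this gap by interpolation. Writing $X_\ell:=\int\psi_\epsilon(V)V^{2^{\ell+1}-2}$, the Sobolev output controls $\psi_\epsilon(V)^\chi V^{\chi(2^{\ell+1}-2)}$. By H\"older's inequality, since $\psi_\epsilon(V)\lesssim V^q$ and $\psi_\epsilon(V)\gtrsim V^p$, the quantity $X_{\ell+1}$ is bounded by a product of the Sobolev quantity and a lower quantity $X_{\ell'}$ raised to suitable powers; this is the ``finite staircase''. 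Iterating finitely many times, each time starting from the higher integrability of Lemma~\ref{lem:high1} and normalising by \eqref{assump:normal}, which gives $\int_{B_1}\psi_\epsilon(V)^{1+\sigma_0}\le c$, we obtain a reverse H\"older inequality of the form
\[
\Big(\int_{B_r}\psi_\epsilon(V)V^{\gamma_{\ell+1}}\Big)^{1/\gamma_{\ell+1}}\le \Big(\frac{C^\ell}{(R-r)^{a}}\Big)^{\beta/\gamma_{\ell}}\Big(\int_{B_R}\psi_\epsilon(V)V^{\gamma_\ell}\Big)^{\theta_\ell/\gamma_\ell},
\]
with $\gamma_\ell\sim 2^\ell$ and $\theta_\ell\le 1$ possibly slightly larger than $1$. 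This is the step I expect to be the hardest: tracking the exponents $\theta_\ell$ so that $\prod_\ell\theta_\ell$ remains bounded, which requires $\sum_\ell(\theta_\ell-1)<\infty$. In the Orlicz case this uses $q/p$ only through constants, because $\psi_\epsilon$ factors out as a weight.

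Finally, I would take radii $r_\ell=\frac12+2^{-\ell-1}$, for which $R-r\sim 2^{-\ell}$. Since $\sum_\ell \ell/\gamma_\ell<\infty$, the constants in the iteration multiply to a finite number, and letting $\ell\to\infty$ yields $\|V\|_{L^\infty(B_{1/2})}\le c(n,p,q)$. Any residual dependence on $\epsilon$ is avoided because all bounds involve only $p$, $q$, and $n$, together with the uniform energy estimate \eqref{eq:energy_estimate1} and Lemma~\ref{lem:high1} under \eqref{assump:normal}. This gives \eqref{est:unifLip}.
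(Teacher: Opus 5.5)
There is a genuine gap. Your architecture matches the paper's: the Caccioppoli inequality \eqref{ineq:gCaccioppoli} plus Sobolev, interpolation across the dyadic exponents, and a Moser iteration with $\sum_\ell \ell/\gamma_\ell<\infty$. However, two key steps fail as written, and a third is missing.

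\textbf{Lower bound on the left-hand side.} Your bound $\psi_\epsilon''(u_{x_i})\lesssim\psi_\epsilon(V)/V^2$ on the right-hand side is fine. The problem is replacing $\sum_k|u_{x_k}|^{2\bar\ell}\psi_\epsilon(u_{x_k})$ on the left by $\psi_\epsilon(V)V^{2\bar\ell}$ with $V=1+|\nabla u|$, because this is not uniform in $\bar\ell$. If all $|u_{x_k}|$ equal $a\gg1$, the left side is $n\,a^{2\bar\ell}\psi_\epsilon(a)$. On the other hand, $|\nabla u|^{2\bar\ell}\psi_\epsilon(|\nabla u|)=n^{\bar\ell}a^{2\bar\ell}\psi_\epsilon(\sqrt n\,a)\ge n^{\bar\ell+1}a^{2\bar\ell}\psi_\epsilon(a)$. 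Where $|\nabla u|\approx1$, the shift to $1+|\nabla u|$ costs a further factor $4^{\bar\ell}$, which no additive constant independent of $\bar\ell$ can absorb. So your displayed reverse H\"older inequality holds only with a constant of order $C^{\bar\ell}$, not $c\,\bar\ell^{\,4}$. After taking $\gamma_\ell$-th roots this is a fixed factor $>1$ at every step, and the Moser product diverges. The paper avoids this by using $\mathcal U=\max_k|u_{x_k}|$ with no shift. Then $\sum_k|u_{x_k}|^{2\bar\ell}\psi_\epsilon(u_{x_k})\ge\mathcal U^{2\bar\ell}\psi_\epsilon(\mathcal U)$ trivially. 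Also $\psi_\epsilon''(u_{x_i})|u_{x_j}|^{2\bar\ell+2}\le c\,\mathcal U^{2\bar\ell}\psi_\epsilon(\mathcal U)$ with $c=c(n,p,q)$, because $\tilde\phi_\epsilon'(s)/s$ is nondecreasing (this is where $p\ge2$ enters) and $t\psi_\epsilon'(t)\le q\,\psi_\epsilon(t)$.

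\textbf{Gap-filling step.} As you describe it, this cannot work. When $2^*/2<2$, the level-$\ell$ Sobolev output sits at a lower exponent than $X_{\ell+1}$, and interpolation never bounds a quantity above its top endpoint. The missing idea is hole filling at a fixed level. Set $f_j=\mathcal U^{\gamma_j}\psi_\epsilon(\mathcal U)$ and interpolate $\int_{B_{s_1}}f_j$ between $\int_{B_{s_1}}f_j^{\gamma_{j-1}/\gamma_j}$ and $\|f_j\|_{L^{2^*/2}(B_{s_1})}$. Bound the latter by \eqref{ineq:gamma_j} at the same level $j$, via $\int_{B_{s_2}}f_j$. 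Absorb this term with Young's inequality and conclude with Lemma~\ref{lem:tec}. Because the interpolation acts on powers of the single function $f_j$, you never compare $\psi_\epsilon$ with $V^p$ or $V^q$. The exponents then telescope exactly, $Y_{j+1}\le C_j(Y_j+1)^{\gamma_j/\gamma_{j-1}}$, so the issue you leave open (controlling $\prod_\ell\theta_\ell$) never arises.

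\textbf{Base of the iteration.} You do not explain how Lemma~\ref{lem:high1} feeds the first rung. The chain gives $\|\nabla u\|_{L^\infty(B_r)}\lesssim(R-r)^{-\beta_1}\big(\int_{B_R}|\nabla u|^2\psi_\epsilon(|\nabla u|)\,dx+1\big)^{1/2}$. But $|\nabla u|^2\psi_\epsilon(|\nabla u|)$ is not controlled by $\psi_\epsilon(|\nabla u|)^{1+\sigma}$. The paper closes this with one more absorption. It writes $\int|\nabla u|^2\psi_\epsilon\le\|\nabla u\|_{L^\infty}^{2-p\sigma}\int|\nabla u|^{p\sigma}\psi_\epsilon$ and uses $|\nabla u|^{p\sigma}\psi_\epsilon(|\nabla u|)\le\psi_\epsilon(|\nabla u|)^{1+\sigma}+c$. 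It then applies Young's inequality and Lemma~\ref{lem:tec} to $Z(t)=\|\nabla u\|_{L^\infty(B_t)}$.
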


\begin{proof} %As in the previous section, we simply write $u=u^\epsilon$.

{\bf Step 1 (a first iterative scheme).}
Applying \eqref{ineq:gCaccioppoli}, we have that
\[
\begin{split}
& \int_{B_{2}}  \left| \nabla \left(|u^{\epsilon}_{x_k}|^{\bar{\ell}} \big[\psi_\epsilon( u^{\epsilon}_{x_k})\big]^{\frac12} \eta \right)\right|^2  \,dx  \\
&\le 2 \int_{B_{2}}  \left| \nabla \left(|u^{\epsilon}_{x_k}|^{\bar{\ell}} \big[\psi_\epsilon( u^{\epsilon}_{x_k})\big]^{\frac12} \right)\right|^2  \eta^2  \,dx +2 \int_{B_{2}} |u^{\epsilon}_{x_k}|^{2\bar{\ell}} \psi_\epsilon( u^{\epsilon}_{x_k}) |\nabla\eta|^2 \,dx  \\
& \le 
 c \, \bar{\ell}^{\,4}  \sum_{i, j =1}^n \int_{B_{2}} \psi_{\epsilon}'' (u^{\epsilon}_{x_i}) |u^{\epsilon}_{x_j}|^{2\bar{\ell}+2 } |\nabla \eta|^2 \,dx + 
c\, {\bar{\ell}}^{\, 4}
 \sum_{i=1}^n \int_{B_{2}} \psi_{\epsilon}'' (u^{\epsilon}_{x_i}) |u^{\epsilon}_{x_k}|^{2\bar{\ell}+2 } |\nabla \eta|^2 \,dx\\
 & \qquad +2 \int_{B_{2}} |u^{\epsilon}_{x_k}|^{2\bar{\ell}} \psi_\epsilon( u^{\epsilon}_{x_k}) |\nabla\eta|^2 \,dx ,
\end{split}
\]
from which, by the application of Sobolev inequality we derive that
\[
\begin{split}
& \bigg( \int_{B_{2}} \left[ |u^{\epsilon}_{x_k}|^{2 \bar{\ell}} \psi_\epsilon( u^{\epsilon}_{x_k})\eta^2  \right]^{\frac{2^*}{2}} \,dx \bigg)^{\frac{2}{2^*}}\\
& \le 
 c \bar{\ell}^4  \sum_{i, j =1}^n \int_{B_{2}} \psi_{\epsilon}'' (u^{\epsilon}_{x_i}) |u^{\epsilon}_{x_j}|^{2\bar{\ell}+2 } |\nabla \eta|^2 \,dx + 
c\bar{\ell}^4
 \sum_{i=1}^n \int_{B_{2}} \psi_{\epsilon}'' (u^{\epsilon}_{x_i}) |u^{\epsilon}_{x_k}|^{2\bar{\ell}+2 } |\nabla \eta|^2 \,dx\\
 & \qquad +c \int_{B_{2}}  |u^{\epsilon}_{x_k}|^{2\bar{\ell}} \psi_\epsilon( u^{\epsilon}_{x_k}) |\nabla\eta|^2 \,dx.
\end{split}
\]
Summing over $k =1,\dots, n$, we obtain  
\[
\begin{split}
& \bigg( \int_{B_{2}}  \left[ \sum_{k=1}^n  |u^{\epsilon}_{x_k}|^{2 \bar{\ell}} \psi_\epsilon( u^{\epsilon}_{x_k})  \right]^{\frac{2^*}{2}} \eta^{2^*} \,dx \bigg)^{\frac{2}{2^*}}\\
&\quad \le c \bar{\ell}^4 \sum_{i,k=1}^n \int_{B_{2}} \psi_{\epsilon}'' (u^{\epsilon}_{x_i}) |u^{\epsilon}_{x_k}|^{2\bar{\ell}+2 } |\nabla \eta|^2 \,dx +c \int_{B_{2}} \left[\sum_{k=1}^n  |u^{\epsilon}_{x_k}|^{2 \bar{\ell}} \psi_\epsilon( u^{\epsilon}_{x_k})\right] |\nabla\eta|^2  \,dx.
  \end{split}
\]
Now we set 
\[
\mathcal{U}(x):= \max_{k=1,\dots, n}  |u^{\epsilon}_{x_k}|.
\]
Since $p\ge 2$, we get
\[
\bigg( \int_{B_{2}}  \left[  |\mathcal{U}|^{2 \bar{\ell}} \psi_\epsilon( \mathcal{U})  \right]^{\frac{2^*}{2}} \eta^{2^*} \,dx \bigg)^{\frac{2}{2^*}} 
\le c\, \bar{\ell}^{\, 4} \int_{B_{2}} \left[  |\mathcal{U}|^{2 \bar{\ell}} \psi_\epsilon( \mathcal{U} )\right] |\nabla\eta|^2  \,dx.
\]

For every pair of radii $1/2\le s_1<s_2 \le 2,$
we take a standard cut-off function $\eta \in C^\infty_0(B_{s_2})$ such that $\eta \equiv 1$ on $B_{s_1}$, $0\le \eta \le 1$, and $\| \nabla \eta \|_{\infty} \le \frac{c}{s_2-s_1}$ into the previous inequality, in order to get 
\[
\bigg( \int_{B_{s_1}}  \left[  \mathcal{U}^{2 \bar{\ell}} \psi_\epsilon( \mathcal{U})  \right]^{\frac{2^*}{2}} \,dx \bigg)^{\frac{2}{2^*}} 
\le \frac{c\, \bar{\ell}^{\, 4} }{(s_2-s_1)^2}\int_{B_{s_2}} \mathcal{U}^{2 \bar{\ell}} \psi_\epsilon( \mathcal{U} )  \,dx.
\]

Setting the sequence of exponents 
\[
\gamma_j = 2^{j+2}-2, \   \  \ j \in \mathbb{N}
\]
and taking $\bar{\ell} = 2^{j+1}-1 = \frac{\gamma_j}2$, we then obtain 
\begin{equation}\label{ineq:gamma_j}
\bigg( \int_{B_{s_1}}  \left[  \mathcal{U}^{\gamma_j} \psi_\epsilon( \mathcal{U})  \right]^{\frac{2^*}{2}} \,dx \bigg)^{\frac{2}{2^*}} 
\le \frac{c\, \bar{\ell}^{\, 4} }{(s_2-s_1)^2}\int_{B_{s_2}} \mathcal{U}^{\gamma_j} \psi_\epsilon( \mathcal{U} )  \,dx.
\end{equation}

{\bf Step 2 (filling the gaps).}
We now observe that 
\[
\gamma_{j -1}< \gamma_j < \frac{2^* \gamma_j}{2},  \ \ \text{ for every } j \in \mathbb{N}.%\backslash \{0\}.
\]
By the elementary interpolation inequality for Lebesgue spaces with
\[
\frac{1}{\gamma_j} = \frac{\tau_j }{\gamma_{j-1}} + \frac{(1-\tau_j)}{2^* \gamma_j/2}, \text{ where } \tau_j := \frac{\frac{2^*}{2} -1}{\frac{2^*\gamma_j}{2\gamma_{j-1}}-1} \in (0,1),
\]
we deduce that
\[
\begin{split}
\int_{B_{s_1}}  \mathcal{U}^{\gamma_j} \psi_\epsilon( \mathcal{U})  \,dx
&=\int_{B_{s_1}} \big[ \mathcal{U}^{\gamma_j} \psi_\epsilon( \mathcal{U}) \big]^{\tau_j + (1-\tau_j)} \,dx\\
& \le \bigg( \int_{B_{s_1}} \big[ \mathcal{U}^{\gamma_j} \psi_\epsilon( \mathcal{U}) \big]^{\frac{\gamma_{j-1}}{\gamma_j}} \,dx\bigg)^{\frac{\tau_j \gamma_j}{\gamma_{j-1}}}
 \bigg( \int_{B_{s_1}} \big[\mathcal{U}^{\gamma_j} \psi_\epsilon( \mathcal{U}) \big]^{\frac{2^*}{2}}\,dx \bigg)^{\frac{2(1-\tau_j)}{2^*}}.
\end{split}
\]
Applying \eqref{ineq:gamma_j} to the last term, we obtain  
\[
\begin{split}
&\int_{B_{s_1}} \mathcal{U}^{\gamma_j} \psi_\epsilon( \mathcal{U})  \,dx
  \le \bigg( \int_{B_{s_1}} \big[\mathcal{U}^{\gamma_j} \psi_\epsilon( \mathcal{U}) \big]^{\frac{\gamma_{j-1}}{\gamma_j}} \,dx\bigg)^{\frac{\tau_j \gamma_j}{\gamma_{j-1}}}
 \bigg(  \frac{c \gamma_j^4}{(s_2-s_1)^2} \int_{B_{s_2}} \mathcal{U}^{\gamma_j} \psi_\epsilon( \mathcal{U}) \,dx \bigg)^{1-\tau_j}\\
&\quad = \bigg[\bigg(\frac{c 2^{4j}}{(s_2-s_1)^2} \bigg)^{\frac{1-\tau_j}{\tau_j}}\bigg( \int_{B_{s_1}} \big[\mathcal{U}^{\gamma_j} \psi_\epsilon( \mathcal{U}) \big]^{\frac{\gamma_{j-1}}{\gamma_j}} \,dx\bigg)^{\frac{\gamma_j}{\gamma_{j-1}}}\bigg]^{\tau_j}\bigg( \int_{B_{s_2}} \mathcal{U}^{\gamma_j} \psi_\epsilon( \mathcal{U})  \,dx\bigg)^{1-\tau_j}.
\end{split}
\]
Here, since the sequence $\{ \tau_j\}_{j\ge 1}$ is decreasing, we have
\[
\frac{2\cdot 2^* -4}{ 3\cdot 2^* -4} =\tau_1\ge \tau_j > \lim_{m\rightarrow \infty} \tau_m = \frac{2^* -2}{2(2^* -1)}: = \bar \tau \qquad \text{for every }\ j \in \mathbb{N},
\]
which yields that
\[
\frac{1-\tau_j}{\tau_j} \le \frac{1-\bar \tau}{\bar \tau}=: \beta.
\]
Then  
%we see 
%\[
%\bigg(\frac{c \gamma_j^4}{(s_2-s_1)^2} \bigg)^{\frac{1-\tau_j}{\tau_j}} \le \bigg(\frac{c 2^{4j}}{(s_2-s_1)^2} \bigg)^{\frac{1-\tau_j}{\tau_j}} \le \bigg(\frac{c 2^{4j}}{(s_2-s_1)^2} \bigg)^{\beta},
%\]
%and so 
using Young's inequality, we derive that for every $1/2\le  r \le s_1< s_2 \le R \le 2$, 
\[
\begin{split}
&\int_{B_{s_1}} \mathcal{U}^{\gamma_j} \psi_\epsilon( \mathcal{U})  \,dx\\
&\ \ \le \bigg[\bigg(\frac{c 2^{4j}}{(s_2-s_1)^2} \bigg)^{\beta}\bigg( \int_{B_{s_1}} \big[ \mathcal{U}^{\gamma_j} \psi_\epsilon( \mathcal{U}) \big]^{\frac{\gamma_{j-1}}{\gamma_j}} \,dx\bigg)^{\frac{\gamma_j}{\gamma_{j-1}}}\bigg]^{\tau_j}
\bigg( \int_{B_{s_2}} \mathcal{U}^{\gamma_j} \psi_\epsilon( \mathcal{U})  \,dx\bigg)^{1-\tau_j}\\
& \ \ \le \frac{1}{2}\int_{B_{s_2}} \mathcal{U}^{\gamma_j} \psi_\epsilon( \mathcal{U})  \,dx + c \frac{2^{4\beta j}}{(s_2-s_1)^{2\beta}}\bigg( \int_{B_{R}} \big[ \mathcal{U}^{\gamma_j} \psi_\epsilon( \mathcal{U}) \big]^{\frac{\gamma_{j-1}}{\gamma_j}} \,dx\bigg)^{\frac{\gamma_j}{\gamma_{j-1}}}.
\end{split}
\]
By applying Lemma~\ref{lem:tec} with $\theta:=1/2, \ \alpha_0 : = 2\beta$ and $Z(t)=\int_{B_{t}} \mathcal{U}^{\gamma_j} \psi_\epsilon( \mathcal{U})  \,dx$,
we conclude that 
\begin{equation}\label{est:r2r}
\int_{B_{r}} \mathcal{U}^{\gamma_j} \psi_\epsilon( \mathcal{U})  \,dx  \le c \frac{2^{4j \beta}}{(R-r)^{2\beta}} \bigg( \int_{B_{R}}  \mathcal{U}^{\gamma_{j-1}} \psi_\epsilon( \mathcal{U})  +1 \,dx \bigg)^{\frac{\gamma_j}{\gamma_{j-1}}}
\end{equation}
for some $c = c(n,p,q)>1$.

{\bf Step 3 (Moser's iteration).}
The foregoing estimate will now be iterated along a family of shrinking balls.
Let us fix $1/2\le r<R\le 2$, and consider the sequence 
\[
R_j = r+ \frac{R-r}{2^{j-1}}, \ \ j \in \mathbb{N}\backslash \{0\}.
\]
We apply \eqref{est:r2r} with $R_{j+1} < R_j$, instead of $r<R$, to discover 
\begin{equation}\label{est:R_j}
\begin{split}
\int_{B_{R_{j+1}}}  \mathcal{U}^{\gamma_j} \psi_\epsilon( \mathcal{U})   \,dx
& \le c \frac{2^{4j \beta}}{(R_j-R_{j+1})^{2\beta}}  \bigg( \int_{B_{R_j}}  \mathcal{U}^{\gamma_{j-1}} \psi_\epsilon( \mathcal{U}) +1\,dx\bigg)^{\frac{\gamma_j}{\gamma_{j-1}}}\\
&\le   c \frac{2^{6j \beta}}{(R-r)^{2\beta}} \bigg( \int_{B_{R_j}}  \mathcal{U}^{\gamma_{j-1}} \psi_\epsilon( \mathcal{U}) +1 \,dx\bigg)^{\frac{\gamma_j}{\gamma_{j-1}}}
\end{split}
\end{equation}
for some constant $ c=c(n,p,q)>1$.

We write 
\[
Y_{j} :=  \int_{B_{R_j}}   \mathcal{U}^{\gamma_{j-1}} \psi_\epsilon( \mathcal{U}) \,dx,
\]
and then \eqref{est:R_j} can be rewritten as 
\[
Y_{j+1}
\le  c \frac{2^{6j \beta}}{(R-r)^{2\beta}} \big( Y_{j} + |B_{R_j}|\big)^{\frac{\gamma_j}{\gamma_{j-1}}}\le c \frac{2^{6j \beta}}{(R-r)^{2\beta}} \big( Y_{j} + 1\big)^{\frac{\gamma_j}{\gamma_{j-1}}}.
\]
Here, we have used that $R \le 2$. 
By iterating the previous estimate starting from $j=1$, 
it follows that
\[
Y_{m+1} \le  ( c 2^{6 \beta}(R-r)^{-2\beta})^{\sum_{j=0}^{m-1} (m-j)\frac{\gamma_m}{\gamma_{m-j}}} \bigg( Y_{1} + 1\bigg)^{\frac{\gamma_m}{\gamma_{0}}}.
\]

Now we take the power $1/\gamma_m$ on both sides to get
\[
Y_{m+1}^{\frac1{\gamma_m}} \le  ( c 2^{6 \beta}(R-r)^{-2\beta})^{\sum_{j=0}^{m-1} \frac{m-j}{\gamma_{m-j}}} \big( Y_{1} + 1\big)^{\frac{1}{\gamma_{0}}} = ( c 2^{6 \beta}(R-r)^{-2\beta})^{\sum_{j=1}^{m} \frac{j}{\gamma_{j}}} \big( Y_{1} + 1\big)^{\frac{1}{\gamma_{0}}}.
\]
Setting 
\[
d\nu(x): =  \psi_\epsilon (\mathcal{U})  \,dx,
\]
we derive 
\[
\begin{split}
 \| \mathcal{U}\|_{L^{\infty}(B_r)} &= \| \mathcal{U}\|_{L^{\infty}(B_r ; d\nu)}= \lim_{m\rightarrow \infty } \left(\int_{B_{R_{m+1}}}  \mathcal{U}^{\gamma_{m}} \, d\nu \right)^{\frac1{\gamma_m}}
% \\ & = \lim_{m\rightarrow \infty } \left(\int_{B_{R_{m+1}}}  \mathcal{U}^{\gamma_{m}} \bigg[ \mathcal{U}^{2} \frac{\phi'((\epsilon^2 + \mathcal{U}^2)^{\frac12})}{(\epsilon^2 + \mathcal{U}^2)^{\frac12}} \bigg] \,dx\right)^{\frac1{\gamma_m}}\\&
= \lim_{m\rightarrow \infty }Y_{m+1}^{\frac1{\gamma_m}} \\
 & \le  ( c 2^{6 \beta}(R-r)^{-2\beta})^{\sum_{j=1}^{\infty} \frac{j}{\gamma_{j}}} \bigg( Y_{1} + 1\bigg)^{\frac{1}{\gamma_{0}}} \\
 & \le c (R-r)^{-\beta _1}\bigg( Y_{1} + 1\bigg)^{\frac{1}{2}} 
 = \frac{c}{(R-r)^{\beta _1}}\bigg( \int_{B_{R}}  \mathcal{U}^{2}\psi_\epsilon(\mathcal{U})\,dx + 1\bigg)^{\frac{1}{2}}
\end{split}
\]
for some $c = c(n,p,q)>1$ and $\beta _1 = \beta _1(n,p,q)>0$,
since \[
\sum_{j=1}^{\infty} \frac{j}{\gamma_{j}} = \sum_{j=1}^{\infty} \frac{j}{2^{j+2}-2}< \infty.
\]
Recalling the definition of $\mathcal{U}$, we conclude 
\[
\begin{split}
 \| \nabla u^{\epsilon} \|_{L^{\infty}(B_r)} 
 & \le \frac{c}{(R-r)^{\beta _1}}\bigg( \int_{B_{R}} |\nabla u^{\epsilon}|^{2}\psi_{\epsilon}( |\nabla u^{\epsilon}|)\,dx + 1\bigg)^{\frac{1}{2}}.
\end{split}
\]

{\bf Step 4: $L^{\infty}-L^{\psi_\epsilon}$ estimate.}
We recall the reverse H\"older inequality in Lemma~\ref{lem:high1}:
 \begin{equation}\label{assump:normalg}
\int_{B_1} \psi_\epsilon(|\nabla u^{\epsilon}|)^{1+\sigma} \,dx\le c \left(\int_{B_2} \psi_\epsilon(|\nabla u|) \,dx\right)^{1+\sigma} \le c
\end{equation}
for some small $\sigma>0$, where we have used \eqref{assump:normal}  in the last inequality. 
We may assume that $p\sigma<2$. By applying the previous estimate, we have that
for every $\frac12\le s_1<s_2\le 1$,
\[
\begin{split}
 \| \nabla u^{\epsilon} \|_{L^{\infty}(B_{s_1})} 
 & \le \frac{c}{(s_2-s_1)^{\beta _1}}\bigg(\int_{B_{s_2}}  |\nabla u^{\epsilon}|^{2}\psi_{\epsilon}( |\nabla u^{\epsilon}|)\,dx\bigg)^{\frac12} + \frac{c}{(s_2-s_1)^{\beta _1}}.
\end{split}
\]

We note from \eqref{eq:g'g''} with $p\ge 2$ that  
\[
|\nabla u^{\epsilon}|^{p\sigma}  
\le |\nabla u^{\epsilon}|^{p\sigma} |\nabla u^{\epsilon}|^{-p\sigma} \psi_\epsilon (|\nabla u^{\epsilon}|)^{\sigma} = \psi_\epsilon (|\nabla u^{\epsilon}|)^{\sigma} 
\quad \text{for } \ |\nabla u^{\epsilon} | > 1,
\]
which implies 
\[
|\nabla u^{\epsilon} |^{p\sigma} \psi_\epsilon(|\nabla u^{\epsilon}|) \le \psi_\epsilon(|\nabla u^{\epsilon}|)^{1+\sigma} + c .
\] 
Then by   \eqref{assump:normalg} and Young's inequality,
we derive  
\[
\begin{split}
 \| \nabla u^{\epsilon} \|_{L^{\infty}(B_{s_1})} 
 & \le \frac{c}{(s_2-s_1)^{\beta _1}}\bigg( \int_{B_{s_2}} |\nabla u^{\epsilon}|^{2}\psi_{\epsilon}( |\nabla u^{\epsilon}|)\,dx\bigg)^{\frac12} + \frac{c}{(s_2-s_1)^{\beta _1}}\\
 &\le   \frac{c}{(s_2-s_1)^{\beta _1}}\bigg( \int_{B_{s_2}} |\nabla u^{\epsilon}|^{p\sigma} \psi_\epsilon( |\nabla u^{\epsilon}|) \,dx\bigg)^{\frac12} \| \nabla u^{\epsilon} \|^{1-\frac{p\sigma}2}_{L^{\infty}(B_{s_2})}+ \frac{c}{(s_2-s_1)^{\beta _1}}\\
 &\le \frac{c}{(s_2-s_1)^{\beta _1}}\bigg( \int_{B_{s_2}}\psi_\epsilon(|\nabla u^{\epsilon}|)^{1+\sigma}  \,dx + 1\bigg)^{\frac12} \| \nabla u^{\epsilon} \|^{1-\frac{p\sigma}2}_{L^{\infty}(B_{s_2})}+ \frac{c}{(s_2-s_1)^{\beta _1}}\\
 &\le   \frac{c}{(s_2-s_1)^{\beta _1}} \| \nabla u^{\epsilon} \|^{1-\frac{p\sigma}2}_{L^{\infty}(B_{s_2})}+ \frac{c}{(s_2-s_1)^{\beta _1}}\\
  &\le  \frac12 \| \nabla u^{\epsilon} \|_{L^{\infty}(B_{s_2})} +\frac{c}{(s_2-s_1)^{\beta_2} }
 \end{split}
\]
for some $\beta_2>0$.
Finally, applying Lemma~\ref{lem:tec} with $\theta:=\frac{1}{2}, \ \alpha_0 : = \beta_2$, and 
$
Z(t):= \| \nabla u^{\epsilon} \|_{L^{\infty}(B_t)},
$
we conclude that 
\[
\| \nabla u^{\epsilon} \|_{L^{\infty}(B_{1/2})}  \le  c. \qedhere
\]
\end{proof}

\begin{lemma}(cf. \cite[Lemma~6.11]{Giusti_book})\label{lem:tec}
Let $0<r<R$ and let $Z : [r,R] \rightarrow [0,\infty)$ be a bounded function. Assume that for $r \le t < s \le R$, 
\[
Z(t) \le \frac{A}{(s-t)^{\alpha_0}} +  B + \theta Z(s),
\]
with $A, B \ge 0, \ \alpha_0 >0$ and $0\le \theta <1$. Then we have 
\[
Z(r) \le \bigg( \frac{1}{(1-\lambda)^{\alpha_0}}\frac{\lambda^{\alpha_0}}{\lambda^{\alpha_0}-\theta}\bigg) \bigg[\frac{A}{(R-r)^{\alpha_0}}  +  B \bigg],
\]
where $\lambda$ is any number such that 
$
\theta^{\frac1{\alpha_0}} < \lambda <1.
$
\end{lemma}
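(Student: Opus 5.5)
This is the classical iteration lemma (Giusti, Lemma 6.1), and I would prove it by iterating the hypothesis along a geometric sequence of radii accumulating at $R$. Fix $\lambda$ with $\theta^{1/\alpha_0}<\lambda<1$. Set
\[
t_0:=r,\qquad t_{i+1}:=t_i+(1-\lambda)\lambda^{i}(R-r),\quad i\ge 0.
\]
Then $t_i$ is increasing, $t_i\in[r,R)$, $t_i\to R$, and $t_{i+1}-t_i=(1-\lambda)\lambda^i(R-r)$.

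Applying the hypothesis with $t=t_i$, $s=t_{i+1}$ gives
\[
Z(t_i)\le \frac{A}{(1-\lambda)^{\alpha_0}\lambda^{i\alpha_0}(R-r)^{\alpha_0}}+B+\theta Z(t_{i+1}).
\]
Iterating this $k$ times from $i=0$, I obtain
\[
Z(r)\le \theta^k Z(t_k)+\sum_{i=0}^{k-1}\theta^i\Big[\frac{A}{(1-\lambda)^{\alpha_0}(R-r)^{\alpha_0}}\lambda^{-i\alpha_0}+B\Big].
\]

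Since $Z$ is bounded and $\theta<1$, the term $\theta^kZ(t_k)\to0$ as $k\to\infty$. This is exactly where the boundedness assumption on $Z$ enters, and it is the only delicate point. (If $\theta=0$ the statement is immediate.)

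The remaining sums are geometric series. Because $\theta\lambda^{-\alpha_0}<1$ by the choice of $\lambda$,
\[
\sum_{i\ge0}(\theta\lambda^{-\alpha_0})^i=\frac{\lambda^{\alpha_0}}{\lambda^{\alpha_0}-\theta}.
\]
The $B$-term is controlled by $\sum_{i\ge0}\theta^i=\frac{1}{1-\theta}$. Since $\lambda<1$, we have $\frac1{1-\theta}\le\frac{\lambda^{\alpha_0}}{\lambda^{\alpha_0}-\theta}$, and since $(1-\lambda)^{-\alpha_0}\ge1$, this is in turn at most $\frac{1}{(1-\lambda)^{\alpha_0}}\frac{\lambda^{\alpha_0}}{\lambda^{\alpha_0}-\theta}$.

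Combining the two bounds gives exactly the stated constant multiplying $\frac{A}{(R-r)^{\alpha_0}}+B$. No step poses a real obstacle beyond checking that $\theta\lambda^{-\alpha_0}<1$ and keeping the bookkeeping of the constants straight.
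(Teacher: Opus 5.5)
Your proof is correct and is the standard geometric-iteration argument from Giusti's book, which the paper cites instead of giving a proof: iterate along $t_{i+1}=t_i+(1-\lambda)\lambda^i(R-r)$, use the boundedness of $Z$ to send $\theta^k Z(t_k)\to 0$, and sum the geometric series with ratio $\theta\lambda^{-\alpha_0}<1$. You absorb the $B$-term through $\frac{1}{1-\theta}\le\frac{\lambda^{\alpha_0}}{\lambda^{\alpha_0}-\theta}$ and $(1-\lambda)^{-\alpha_0}\ge 1$, which produces exactly the stated constant.
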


Finally, the main theorem follows from the previous proposition, using the strong convergence of $\{u^\epsilon\}$ to $u$ and a standard scaling argument.

\begin{proof}[Proof of Theorem~\ref{thm:main}]
Fix $B_{2r} \Subset \Omega$. Without loss of generality, we assume that the center of the ball $B_{2r}$ is the origin. We define $\bar{\phi}(t) := \frac{\phi(\lambda t )}{\phi(\lambda)}$ and $\bar{u}(x) := \frac{u(rx)}{\lambda r}$, where 
\[
\lambda := \phi^{-1} \bigg( \fint_{B_{2r}} \phi(|\nabla u|)\,dx \bigg).
\]
Then we observe that $\bar{\phi}$ also satisfies Assumption~\ref{assump:growth2} (with $\phi$ replaced by $\bar{\phi}$), and that $\bar{u}$ is a minimizer of the functional
\[
\mathfrak{F}_{\bar{\phi}}(\bar{u} ; B_2) := \sum_{i=1}^n \int_{B_2} \bar{\phi}(|\bar{u}_{x_i}|)\,dx.
\]
Consequently, it is a weak solution to
\[
\sum_{i=1}^n \left(\frac{\bar{\phi}' (|\bar{u}_{x_i}|)}{|\bar{u}_{x_i}|}\bar{u}_{x_i}\right)_{x_i} = 0 \quad \text{in } B_2,
\]
and we have
\[
\fint_{B_{2}} \bar{\phi}(|\nabla \bar{u}|)\,dx = 1.
\]

Let $\epsilon \in (0,\epsilon_1)$, where $\epsilon_1 = \min\{1, \mathrm{dist}(B_{2},\partial\tilde{\Omega})\}$ and $\tilde{\Omega} := \{x \in \mathbb{R}^n: rx \in \Omega\}$. We consider the minimizer $\bar{u}^\epsilon$ of the corresponding regularized variational problem 
\[
\mathfrak{F}_{\bar{\psi}_\epsilon}(v ; B_{2}) := \sum_{i=1}^n \int_{B_{2}} \bar{\psi}_\epsilon ( v_{x_i} ) \, dx, \quad v \in \bar{U}^{\epsilon} + W^{1,\bar{\phi}}_0(B_{2}),
\] 
where $\bar{\psi}_\epsilon$ is defined as in \eqref{def:g} with $\phi$ replaced by $\bar{\phi}$, and $\bar{U}^\epsilon(x) := \bar{u} * m_\epsilon (x)$. Then by 
Proposition~\ref{prop:unifLip}, $
\|\nabla \bar{u}^\epsilon \|_{L^\infty(B_{1/2})} \le c
$ for every small $\epsilon>0$, where the constant $c>0$ is independent of $\epsilon$,
and hence by the strong convergence of $\nabla \bar{u}^\epsilon$ to $\nabla \bar{u}$ in $L^{\bar{\phi}}(B_{2},\mathbb{R}^n)$ (Proposition~\ref{prop:strongconv}), and the uniform estimate \eqref{est:unifLip}, we obtain 
\[
\|\nabla \bar{u} \|_{L^\infty(B_{1/2})} \le c
\] 
for some constant $c = c(n,p,q) > 0$. Therefore, the desired estimate \eqref{estimate:Lip} follows by scaling back to the original variables.
\end{proof}

%%%%%%%%%%%%%%%%%%%%%%%%%%%%%%%%%%%%%%%%%%%%%%%%%%%%%%%%%%%%%%%%%%%%%%%%%%%%%%%%

\section*{Acknowledgment}
M. Lee was supported by the National Research Foundation of Korea(NRF) grant funded by the Korea government(MSIT) (RS-2025-23525636). 
J. Ok was supported by the National Research Foundation of Korea(NRF) grant funded by the Korea government(MSIT)
(NRF-2022R1C1C1004523). J. Ok and B. Stroffolini were funded by the University of Naples Federico II through  the International Co-operation between University of Naples Federico II and Sogang University. The research of B. Stroffolini is part of the project
 “Geometric Evolution Problems and Shape
Optimizations”, PRIN  Project 2022E9CF89. 

\subsection*{Data availability} Data sharing is not applicable to this article as obviously no datasets were generated or
analyzed during the current study.

\subsection*{Conflict of Interest} The authors declare no conflict of interest.
%%%%%%%%%%%%%%%%%%%%%%%%%%%%%%%%%%%%%%%%%%%%%%%%%%%%%%%%%%%%%%%%%%%%%%%%%%%%%%%%
\bibliographystyle{amsplain}

\end{document}